\numberwithin{equation}{section}
\theoremstyle{plain}
\newtheorem{theo}{Theorem}[section]
\newtheorem{prop}{Proposition}[section]
\newtheorem{lem}{Lemma}[section]
\newtheorem{defi}{Definition}[section]
\newtheorem{rem}{Remark}[section]
\newcommand{\dR}{\mathbb{R}}
\email{marius.soltane.etu@univ-lemans.fr}
\keywords{Asymptotic efficiency, Autoregressive process, AUMPI test, Ergodic control, Gaussian noise, LAN property, Maximum Likelihood estimator.}
\begin{document}

\title[Asymptotic efficiency for the MLE]
{Asymptotic efficiency in the autoregressive process driven by a stationary gaussian noise
\vspace{2ex}}
\author[M. Soltane]{Marius Soltane}
\address{Laboratoire Manceau de Math\'ematiques, Le Mans Universit\'e, Avenue O. Messiaen, 72085 Le Mans cedex 9, France.}

\begin{abstract}
The first purpose of this article is to obtain $a.s.$ asymptotic properties of the maximum likelihood estimator in the autoregressive process driven by a stationary Gaussian noise. The second purpose is to show the local asymptotic normality property of the likelihoods ratio in order to get a notion of asymptotic efficiency and to build an asymptotically uniformly invariant most powerful procedure for testing the significance of the autoregressive parameter.  
\end{abstract}

\maketitle

\section{Introduction.}\label{Sec1}
\label{SecIntro}
Classical autoregressive processes driven by strong white noise were introduced by Box-Jenkins and studied as early in \cite{BD1991}. Now models using autoregressive processes with dependant perturbations are widely used in various fields, especially in econometrics and finance. The asymptotic behavior of the least square estimator (LSE) is generally degraded for this type of process and no consistent for the autoregressive parameter (see \cite{BF2013} for an illustration of this fact where the author consider an AR(1) process driven by an AR(1) noise).  A more general study with an AR($p$) process driven by an AR(1) noise was realized in \cite{PROIA201377} and some asymptotic properties of the maximum likelihood estimator (MLE) in the model presented later was studied in \cite{BCK14}. In this study no attention is paid to the obtention of $a.s.$ properties of the estimation and no rate of convergence is obtained.
We also address the questions of the asymptotic efficiency for the MLE and the optimality of the test of significance of the parameter driving the autoregressive dynamics.

\

We consider in this paper the stochastic process $(X_n)$ indexed on $\mathbb{N}$ and satisfying for all $n \in \mathbb{N}$,
\begin{equation}\label{Equa_Auto_P1}
X_n = \sum_{i=1}^p \theta_i X_{n-i} + \xi_n.
\end{equation}
In (\ref{Equa_Auto_P1}) the nuisance process $(\xi_n)$ is a stationary centered Gaussian process and we assume that $X_{-p} = \dots = X_{-1} = 0.$

\

To obtain an explicit formula for the MLE, a transformation of the model  is carried out, in order to obtain an independent noise.
Typically, the arguments used to obtain the asymptotic properties of the  estimators in this type of processes call for results on martingales.
To apply these results, ergodicity arguments are invoked but we will see later that this can not be verified, strictly speaking. We will therefore present in section \ref{Sec4} a new method to apply the standard results for martingales and to obtain the desired properties.
The second section is devoted to the presentation of the model in particular, we recall the well-know results related to this AR process. The third section contains the presentation of the results etablished on the MLE. In particular, we are getting $a.s.$ convergence for the filtered process, which furthermore makes it possible to obtain quadratic strong law for the MLE and its the strong consistency. In the same section, we also get the LAN property which will allow us to build an optimal test. 
For the sake of clarity, a part containing the auxiliary results precedes the part containing the proofs of the mains results

\section{Preliminaries.}\label{Sec1pre}

\subsection{Model and assumptions.}

\

\

In the rest of the article $\Vert x \Vert$ refers to the euclidian norm of a vector $x$ and $Id_p$ is the identity matrix of size $p \times p$. When $M$ is a matrix, $\Vert M \Vert$ is the usual matrix norm induced by the Euclidian norm. Finally, $A^*$ is the transpose of $A.$

\

\noindent We use (\ref{Equa_Auto_P1}) in order to write the model in a vectorial form. Let 

\begin{align*}
A_0 = \begin{pmatrix}
\theta_1 & \theta_2 & \dots & \dots & \theta_p \\
1 & 0 & \dots & 0 & 0 \\
0 & 1 & \dots & 0 & 0 \\
\dots & \dots & \dots & \dots & \dots \\
0 & \dots & 0 & 1 & 0
\end{pmatrix} \ \text{and} \ Y_n=(X_n, X_{n-1}, \dots, X_{n-p+1})^*.
\end{align*}

\noindent Then, for all $n$,

\begin{equation}\label{AR(p)_vect_P1}
Y_n = A_0Y_{n-1} + b\xi_n,
\end{equation}

\noindent where $b=(1,0_{1 \times (p-1)})^*.$
In all that follows, we retain the following hypotheses :

\begin{itemize}
\item $(H_1)$ $\rho(A_0) < 1$ where $\rho$ refers to the spectral radius of the matrix $A_0$. The Parametric space is therefore $\Theta = \{ \theta \in \mathbb{R}^p | \rho(A_0) < 1\}.$
\item $(H_2)$ the covariance fonction $r$ of the nuisance process satisfies $r(n) = O(\frac{c}{n^{\alpha}})$ when $n \rightarrow \infty$. In this relation, $\alpha > 0$ and $c$ is a positive constant. 
\item $(H_3)$ Let $(\beta_n)$ be the PACF of $(\xi_n)$, we suppose that $\beta_n^2=O(\frac{1}{n^{\alpha}})$ for some $\alpha>1.$
\end{itemize}

\noindent The last assumption is slightly stronger than $(\beta_n^2) \in \ell^1 (\mathbb{N})$ which holds in this study, but it will be required in our technical proofs.
Let $f_{\xi}$ the spectral density of the process $(\xi_n)$, not thats $(H_2)$ can be rewritten (see \cite{BCK14}) in term on condition of the spectral density $f_{\xi}$ as 

\begin{equation}\label{condition_DS}
\displaystyle \int_{-\pi}^{\pi} \vert \log f_{\xi}(\lambda) \vert \, \mathrm{d}\lambda < \infty.
\end{equation}

\subsection{Model Transformation.}\label{ModelTrans}

\

\

In this section, we present a linear transformation in order to obtain a Markov process driven by independent noise.
Let $\sigma_1\varepsilon_1 = \xi_1$ and for all $n \geqslant 2$,

\begin{equation}\label{cond_exp1_nuissanceP1}
\sigma_n\varepsilon_n = \xi_n - \mathbb{E}(\xi_n | \xi_1, \dots, \xi_{n-1}),
\end{equation}

\noindent where $(\varepsilon_n)$ are $i.i.d.$ and $\varepsilon_n \sim \mathcal{N}(0,1).$ By the Theorem of Normal Correlation (Theorem 13.1 in \cite{LS01}) we have,

\begin{equation}\label{Esp_cond_Filt_P1}
\sigma_n\varepsilon_n = \sum_{i=1}^n k(n,i)\xi_i,
\end{equation}

\noindent where $(k(n,i)_{\{1 \leqslant i \leqslant n, n \in \mathbb{N}^*\}})$ is a deterministic kernel and $(\sigma_n^2)$ is the variance of innovations.

\noindent Let
\begin{equation}\label{coor_part_P1}
\beta_{n-1} = -k(n,1).
\end{equation}

\noindent By the Durbin-Levinson algorithm (see \cite{D60}), the following relations are true and make it possible to calculate the coefficients interventing in (\ref{Esp_cond_Filt_P1}).

\begin{equation}\label{Durbin_var_P1}
\sigma_n^2=\prod_{i=1}^{n-1} (1-\beta_i^2), \ n \geqslant 2, \ \sigma_1=1,
\end{equation}

\begin{equation}\label{Durbin_beta_P1}
\sum_{i=1}^n k(n,i)r(i) = \beta_n\sigma_n^2, \ k(n,n) = 1,
\end{equation}

\begin{equation}\label{Durbin_coeff_P1}
k(n+1,n+1-i)=k(n,n-i)-\beta_nk(n,i), \ 1 \leqslant i \leqslant n-1.
\end{equation}

\noindent Now, let also

\begin{equation}\label{Trans1_P1}
Z_n= \sum_{i=1}^n k(n,i)Y_i,
\end{equation}

\noindent and,

\begin{equation}\label{Process_Filtré_P1}
\zeta_n= \begin{pmatrix}
Z_n \\
\sum_{k=1}^{n-1} \beta_kZ_k
\end{pmatrix}.
\end{equation}

\noindent 
The initial estimation problem of $\theta$ is replaced by the estimation of the unknown parameter $\theta$ from the observations $\zeta = (\zeta_n, \ n \geqslant 1).$
It was shown in \cite{BK12} that $(\zeta_n)$ is a $2p$-dimensionnal Markov process. More precisely, for all $n \in \mathbb{N}^*$, 

\begin{equation}\label{nouveau_zeta_P1}
\zeta_n=\widetilde{A}_{n-1}\zeta_{n-1} + \ell\sigma_n\varepsilon_n,
\end{equation}

\noindent where 

\begin{align*}
\widetilde{A}_n = \begin{pmatrix}
A_0 & \beta_nA_0 \\
\beta_nId_p & Id_p
\end{pmatrix}, \ \ell=(1,0_{1 \times (2p - 1)})^* \ \text{and} \ \zeta_0 = 0_{2p \times 1}.
\end{align*}

\

\noindent Therefore, the log-likelihood function is given by

\begin{equation}\label{log_vraisemblance_P1}
\log \mathcal{L}(\theta,X^{(n)}) = -\frac{1}{2} \sum_{i=1}^n \left(\frac{\ell^*(\zeta_i-\widetilde{A}_{i-1}\zeta_{i-1})}{\sigma_i}\right)^2 - \frac{n}{2} \log 2\pi - \frac{1}{2}\sum_{i=1}^n\sigma_i^2
\end{equation}

\noindent where $\theta=(\theta_1, \dots, \theta_p)$ and $X^{(n)} = (X_0,X_1, \dots,X_n).$

\subsection{Construction of the MLE and reminders of known properties.}

\

\

Using (\ref{log_vraisemblance_P1}), it follows that the MLE is given by

\begin{equation}
\widehat{\theta}_n=\left(\sum_{i=1}^n \frac{a_{i-1}^*\zeta_{i-1}\zeta_{i-1}^*a_{i-1}}{\sigma_i^2}\right)^{-1}\left(\sum_{i=1}^n \frac{a_{i-1}^*\zeta_{i-1}\ell^*\zeta_i}{\sigma_i^2}\right),
\end{equation}

\noindent where $a_n = (Id_p ,  \beta_nId_p)^*.$
The matrix $I(\theta)$ is the unique solution of the Lyapunov equation given by

\begin{equation}\label{Lyapu_P1}
I(\theta) = A_0^*I(\theta)A_0+bb^*,
\end{equation}

\noindent and we have the following properties (see Theorem 1 in \cite{BCK14}) :

\begin{equation}\label{convergence_est_P1}
{\widehat{\theta}_n}\xrightarrow[n\to\infty]{\mathbb{P}_{\theta}^{(n)}}{\theta},
\end{equation}

\begin{equation}\label{loi_est1_P1}
{\sqrt{n}(\widehat{\theta}_n-\theta)}\xrightarrow[n\to\infty]{\mathcal{L}}{\mathcal{N}(0,I(\theta)^{-1})}
\end{equation}

\noindent under $\mathbb{P}_{\theta}^{(n)}.$

\section{Mains Results.}\label{Sec2}

\subsection{Almost sure properties of the MLE}\label{Sec2.1}

\

\

The results of this part is the strong consistency of the MLE, the quadratic strong law for the MLE and a law of the iteraded logarithm.
All the results presented in this section are valid under $(H_1), (H_2), (H_3).$

\begin{theo}\label{Theo1_P2}
The MLE is strongly consistent, i.e.
\begin{equation}\label{strong_MLE}
{\widehat{\theta}_n}\xrightarrow[n\to\infty]{a.s.}{\theta}.
\end{equation}
\end{theo}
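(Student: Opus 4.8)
The plan is to linearize the maximum likelihood estimator into a normalized martingale and then to show that the associated information matrix grows linearly in $n$, so that the normalized martingale term vanishes while the normalized information term stabilizes.

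First I would exploit the block structure of the model to obtain an exact affine decomposition of the estimator. Writing $\zeta_{i-1}=\big(Z_{i-1}^*,\, (\sum_{k=1}^{i-2}\beta_k Z_k)^*\big)^*$ and using the form of $\widetilde{A}_{i-1}$, one checks that the first $p$ coordinates of $\widetilde{A}_{i-1}\zeta_{i-1}$ equal $A_0\,a_{i-1}^*\zeta_{i-1}$, whence $\ell^*\widetilde{A}_{i-1}\zeta_{i-1}=\theta^* a_{i-1}^*\zeta_{i-1}$. Combined with the recursion (\ref{nouveau_zeta_P1}) this yields the innovation identity $\ell^*\zeta_i=\theta^* a_{i-1}^*\zeta_{i-1}+\sigma_i\varepsilon_i$. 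Substituting it into the numerator of $\widehat{\theta}_n$ gives $\widehat{\theta}_n-\theta=S_n^{-1}M_n$, where
\[
S_n=\sum_{i=1}^n \frac{a_{i-1}^*\zeta_{i-1}\zeta_{i-1}^*a_{i-1}}{\sigma_i^2},\qquad M_n=\sum_{i=1}^n \frac{a_{i-1}^*\zeta_{i-1}}{\sigma_i}\,\varepsilon_i.
\]
Since $\sigma_i$ is deterministic and $\zeta_{i-1}$ is $\cF_{i-1}:=\sigma(\varepsilon_1,\dots,\varepsilon_{i-1})$-measurable while $\varepsilon_i\sim\cN(0,1)$ is independent of $\cF_{i-1}$, the process $(M_n)$ is an $\dR^p$-valued square-integrable martingale whose predictable quadratic variation is exactly $\langle M\rangle_n=S_n$.

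Next I would reduce the theorem to two facts: $(a)$ $n^{-1}S_n\to L$ $a.s.$ for some positive-definite matrix $L$; and $(b)$ $n^{-1}M_n\to 0$ $a.s.$ Granting $(a)$, the strong law of large numbers for martingales applied coordinatewise yields $(b)$: the $j$-th component $M_n^{(j)}$ has bracket $(S_n)_{jj}\to\infty$, so $M_n^{(j)}/(S_n)_{jj}\to 0$ $a.s.$, and since $(S_n)_{jj}\sim n\,L_{jj}$ this forces $M_n^{(j)}/n\to 0$. Then, for $n$ large enough that $n^{-1}S_n$ is invertible, $\widehat{\theta}_n-\theta=(n^{-1}S_n)^{-1}(n^{-1}M_n)\to L^{-1}\cdot 0=0$ $a.s.$, which is (\ref{strong_MLE}). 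Thus the whole difficulty concentrates in $(a)$.

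For $(a)$ I would first record that $(H_3)$ makes $(\beta_n^2)$ summable, so by (\ref{Durbin_var_P1}) the innovation variances $\sigma_n^2=\prod_{k=1}^{n-1}(1-\beta_k^2)$ decrease to a strictly positive limit $\sigma_\infty^2$, while $\beta_n\to 0$. Reading the first block of (\ref{nouveau_zeta_P1}) as $Z_n=A_0 Z_{n-1}+\beta_{n-1}A_0\sum_{k=1}^{n-2}\beta_k Z_k+b\,\sigma_n\varepsilon_n$ exhibits $(Z_n)$ as a vanishing time-inhomogeneous perturbation of the stable autoregression $\bar Z_n=A_0\bar Z_{n-1}+b\,\sigma_\infty\varepsilon_n$, whose coefficient satisfies $\rho(A_0)<1$ by $(H_1)$, and which is therefore stationary and ergodic. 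Because $a_{i-1}^*\zeta_{i-1}=Z_{i-1}+\beta_{i-1}\sum_{k=1}^{i-2}\beta_k Z_k$ with $\beta_{i-1}\to 0$, the Cesàro averages $n^{-1}S_n$ should coincide asymptotically with those of $(\bar Z_n)$, for which the classical ergodic theorem delivers convergence to the stationary second moment $L=\sigma_\infty^{-2}\,\Gamma$ with $\Gamma$ the positive-definite solution of $\Gamma=A_0\Gamma A_0^*+\sigma_\infty^2\,bb^*$; in view of the known limit law (\ref{loi_est1_P1}) this limit must be the Fisher information $I(\theta)$ of (\ref{Lyapu_P1}).

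The main obstacle is precisely to make this comparison rigorous: because $\widetilde{A}_n$ depends on $n$ through $\beta_n$, the chain $(\zeta_n)$ is genuinely time-inhomogeneous and \emph{not} ergodic, so the ergodic theorem cannot be invoked for it directly. The plan is to use the summability and the polynomial rate supplied by $(H_3)$ to control, in an almost sure and Cesàro sense, the discrepancy between $(\zeta_n)$ and its frozen ergodic counterpart $(\bar Z_n)$ — this is the substitute for the ergodicity argument announced for Section~\ref{Sec4} — and then transfer the ergodic averaging to $n^{-1}S_n$. I expect the quantitative bookkeeping of the perturbation terms (bounding $\beta_{i-1}\sum_{k=1}^{i-2}\beta_k Z_k$ and replacing $\sigma_i^2$ by $\sigma_\infty^2$ uniformly enough to preserve $a.s.$ convergence) to be the technical heart of the proof.
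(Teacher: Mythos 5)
Your skeleton is the same as the paper's: you recover the exact martingale decomposition $\widehat{\theta}_n-\theta=\langle M\rangle_n^{-1}M_n$ of (\ref{decompo_MLE_P1}), reduce strong consistency to the a.s.\ convergence $n^{-1}\langle M\rangle_n\to I(\theta)$, and propose to obtain the latter by comparing the first block of the inhomogeneous process $(\zeta_n)$ with a frozen stationary ergodic autoregression driven by $\sigma_{\infty}\varepsilon_n$ --- which is exactly the paper's Proposition \ref{trajectoire_zeta_P2}. Your handling of the martingale term is, if anything, cleaner than the paper's: once $n^{-1}\langle M\rangle_n\to L$ a.s.\ with $L$ positive definite is granted, the coordinatewise strong law for martingales gives $M_n/n\to 0$ a.s.\ directly, where the paper instead appeals to ergodicity of $(\gamma_{n-1}\varepsilon_n)$ and ``similar arguments''.

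However, there is a genuine gap: the step you label (a), and yourself call the technical heart, is only announced, and it is precisely where all of the paper's work lies. Two points are delicate. First, you need $\Vert\beta_{n}\zeta_{n}^{(2)}\Vert\to 0$ a.s., and this does \emph{not} follow from summability of $(\beta_n^2)$ alone: $\zeta_n^{(2)}=\sum_{k\leqslant n-1}\beta_k Z_k$ is a growing sum of correlated terms, and under $(H_3)$ with $1<\alpha\leqslant 2$ the series $\sum_k\vert\beta_k\vert$ may diverge, so the term cannot be dismissed as a ``vanishing perturbation''. The paper controls it through three dedicated results: a uniform bound $\sup_n\Vert\widetilde{A}_{n-1}\cdots\widetilde{A}_{n-k}\Vert<\infty$ (Lemma \ref{Lem_Tech2_P4}, proved by a spectral analysis showing the relevant comparison matrices have norm $1+O(\beta_i^2)$), the consequent estimate $\Vert\mathbb{E}(\beta_n^2\zeta_n\zeta_n^*)\Vert\leqslant Kn\beta_n^2=O(n^{-\delta})$ --- which uses the full polynomial rate $\alpha>1$ of $(H_3)$, not mere $\ell^1$-summability --- and a Borel--Cantelli argument for Gaussian vectors (Lemma \ref{Lem_Tech1_P4}), yielding Lemma \ref{lem_Tech3_P4}. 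Second, even once the pathwise comparisons $\Vert\zeta_n^{(1)}-\gamma_n\Vert\to 0$ and $\Vert\beta_n\zeta_n\Vert\to 0$ are available, transferring the ergodic theorem to $n^{-1}\langle M\rangle_n$ is not automatic: the summands carry the varying weights $\sigma_i^{-2}$, and the cross terms of the form $(\zeta_{i-1}^{(1)}-\gamma_{i-1}+\beta_{i-1}\zeta_{i-1}^{(2)})\gamma_{i-1}^*$ must be shown Ces\`{a}ro-negligible against a sequence that is only bounded in an ergodic-average sense; the paper handles this with a matrix Toeplitz lemma (Lemma \ref{lem_Toep}) combined with Remark \ref{Rem6}. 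Until these estimates are supplied, your proposal is a correct plan rather than a proof.
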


\begin{proof}
See Section \ref{Preuve_2}.
\end{proof}

\begin{theo}\label{Theo2_LFQ_P2}
We have the following quadratic strong law for the MLE,
\begin{equation}\label{LFQ_MLE_P2}
{\frac{1}{\log n}\sum_{k=1}^n(\widehat{\theta}_k-\theta)(\widehat{\theta}_k-\theta)^*}\xrightarrow[n\to\infty]{a.s.}{I(\theta)^{-1}}.
\end{equation}
The limit above is the same as the asymptotic covariance matrix in (\ref{loi_est1_P1}) and $I(\theta)$ is defined in (\ref{Lyapu_P1}).
\end{theo}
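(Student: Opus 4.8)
The plan is to exploit the explicit martingale structure hidden in the MLE and then to invoke a quadratic strong law for vector martingales. First I would rewrite the estimation error. Projecting the Markov dynamics (\ref{nouveau_zeta_P1}) onto $\ell$ and using $\ell^*\ell = 1$ together with the identity $\ell^*\widetilde{A}_{n-1} = (\theta^*, \beta_{n-1}\theta^*)$, one obtains $\ell^*\zeta_n = \Phi_{n-1}^*\theta + \sigma_n\varepsilon_n$, where $\Phi_{n-1} := a_{n-1}^*\zeta_{n-1}$. Substituting this into the closed form of $\widehat{\theta}_n$ yields the clean decomposition
\[
\widehat{\theta}_n - \theta = S_n^{-1}M_n, \qquad S_n := \sum_{i=1}^n \frac{\Phi_{i-1}\Phi_{i-1}^*}{\sigma_i^2}, \qquad M_n := \sum_{i=1}^n \frac{\Phi_{i-1}\varepsilon_i}{\sigma_i}.
\]
Since $(\varepsilon_i)$ is an i.i.d. standard Gaussian sequence adapted to the natural filtration $(\cF_i)$, the process $(M_n)$ is a square-integrable $\dR^p$-valued martingale whose predictable quadratic variation is exactly $\langle M\rangle_n = S_n$, and from (\ref{Durbin_var_P1}) together with $(H_3)$ the weights $\sigma_i^2 = \prod_{k<i}(1-\beta_k^2)$ stay bounded away from $0$ and $\infty$.

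Next I would bring in the almost sure convergence of the normalized information matrix, namely $\frac{1}{n}S_n \to I(\theta)$ a.s. This is precisely the convergence of the filtered process already required for the strong consistency of Theorem \ref{Theo1_P2}, so I would quote it here. It gives $S_k^{-1}\sim \frac{1}{k}I(\theta)^{-1}$, hence $\det S_n\to\infty$ and $S_n^{-1}\to 0$ a.s., and it also yields the stability relation $\langle M\rangle_n^{-1}\langle M\rangle_{n-1} = S_n^{-1}S_{n-1}\to Id_p$.

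With these ingredients I would write $(\widehat{\theta}_k-\theta)(\widehat{\theta}_k-\theta)^* = S_k^{-1}M_kM_k^*S_k^{-1}$ and apply a general quadratic strong law for vector martingales (of Chaabane--Duflo--Bercu type). The Lindeberg/moment hypotheses of that theorem hold here because the increments are Gaussian and each one is individually negligible relative to the linearly growing variation $\langle M\rangle_n\sim nI(\theta)$, while the stability hypothesis was just checked. Replacing $S_k^{-1}$ by its deterministic equivalent $\frac{1}{k}I(\theta)^{-1}$, the statement reduces to the scalar-weighted QSL $\frac{1}{\log n}\sum_{k=1}^n \frac{M_kM_k^*}{k^2}\to I(\theta)$ a.s.; feeding this through the two outer factors produces $I(\theta)^{-1}\,I(\theta)\,I(\theta)^{-1} = I(\theta)^{-1}$, which is the announced limit and, by (\ref{loi_est1_P1}), coincides with the asymptotic covariance.

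I expect the main obstacle to lie in the a.s. convergence $\frac{1}{n}S_n\to I(\theta)$ rather than in the QSL step itself. Because $\widetilde{A}_n$ depends on $n$ through $\beta_n$, the process $(\zeta_n)$ is genuinely non-stationary, so no ergodic theorem applies directly; this is exactly the difficulty announced in the introduction and handled by the method of Section \ref{Sec4}. The secondary technical point is to control, uniformly enough to survive the $\frac{1}{\log n}\sum_k$ averaging, the error made when replacing the random normalization $S_k^{-1}$ by the deterministic $\frac{1}{k}I(\theta)^{-1}$, which is where the decay rate encoded in $(H_3)$ is used.
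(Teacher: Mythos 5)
Your skeleton coincides with the paper's own proof: the same decomposition $\widehat{\theta}_k-\theta=\langle M\rangle_k^{-1}M_k$ from (\ref{decompo_MLE_P1}), the same tool (the quadratic strong law for vector martingales of Chaabane--Maouia, applied with $V_n=\sqrt{n}\,Id_p$), the same reduction to $\frac{1}{\log n}\sum_{k=1}^n M_kM_k^*/k^2 \to I(\theta)$ a.s. (the paper writes the weights as $1-k^p/(k+1)^p\sim p/k$), the same replacement of $\langle M\rangle_k^{-1}$ by $\tfrac{1}{k}I(\theta)^{-1}$ via a Toeplitz-type argument (Lemma \ref{lem_Toep}), and the same final conjugation giving $I(\theta)^{-1}$.

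The genuine gap is your verification of the hypotheses of that QSL. You dismiss them on the grounds that ``the increments are Gaussian and each one is individually negligible,'' but the conditional Lindeberg condition to be checked is $\frac{1}{n}\sum_{i=1}^n\mathbb{E}\left(\Vert\Delta M_i\Vert^2\mathbbm{1}_{\{\Vert\Delta M_i\Vert\geqslant\varepsilon\sqrt{n}\}}\,\middle\vert\,\bar{\mathcal{F}}_{i-1}\right)\to 0$ a.s., where $\Delta M_i=a_{i-1}^*\zeta_{i-1}\varepsilon_i/\sigma_i$ is conditionally Gaussian with the \emph{random, unbounded} scale $\Vert a_{i-1}^*\zeta_{i-1}\Vert/\sigma_i$. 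Gaussianity of $\varepsilon_i$ buys nothing by itself: one must control the empirical distribution of $\Vert a_{i-1}^*\zeta_{i-1}\Vert$ along the trajectory, and --- exactly the obstruction you yourself raise for $S_n/n$ --- no ergodic theorem applies to the inhomogeneous process $(\zeta_n)$. The same issue affects the other QSL hypothesis, the a.s. convergence of the total quadratic variation $[M]_n/n=\frac{1}{n}\sum_i (a_{i-1}^*\zeta_{i-1})(a_{i-1}^*\zeta_{i-1})^*\varepsilon_i^2/\sigma_i^2$, an object distinct from $\langle M\rangle_n/n$ that you never mention. These two verifications are the actual content of the paper's proof: it substitutes the stationary ergodic $\gamma_{i-1}$ for $a_{i-1}^*\zeta_{i-1}$ (Proposition \ref{trajectoire_zeta_P2} and Lemma \ref{lem_Tech3_P4}), works in the enlarged filtration $\bar{\mathcal{F}}_n=\bar{\mathcal{F}}_n(X_0,\dots,X_n,\gamma_0,\dots,\gamma_n)$ --- needed because $\gamma_n$ is not measurable with respect to the natural filtration you use; see Remark \ref{rem_new_filtration} --- and proves Lindeberg by splitting into three remainders $R_{1,n},R_{2,n},R_{3,n}$ treated with the ergodicity of $(\gamma_{n-1}\varepsilon_n)$ (Stout's theorem), a truncation level $M\to\infty$, and Cauchy--Schwarz. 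Conversely, the step you single out as the main obstacle, $S_n/n\to I(\theta)$ a.s., is indeed nontrivial but is already established in the proof of Theorem \ref{Theo1_P2} (see (\ref{conv_crochet_ps})) and can simply be quoted, as you do; so your proposal leaves unproved precisely the part of the argument that is new to this theorem.
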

\begin{proof}
See Section \ref{Preuve_3}.
\end{proof}

To conclude this section, we give the LLI of the MLE and hence the convergence rate of the MLE.

\begin{prop}\label{Prop_LLI}
We have the following properties for all $v \in \dR^p$,
\begin{align*}
\limsup\limits_{n \to \infty} \left(\frac{n}{2\log \log n} \right)^{\frac{1}{2}} v^*(\widehat{\theta}_n-\theta) & =  - \liminf\limits_{n \to \infty} \left( \frac{n}{2\log \log n} \right)^{\frac{1}{2}} v^*(\widehat{\theta}_n-\theta) \\
& = (v^*I(\theta)^{-1}v)^{\frac{1}{2}} \ a.s.
\end{align*}
Consequently,
\begin{equation}\label{vitesse_MLE}
\left\Vert \widehat{\theta}_n - \theta \right\Vert^2 = O\left( \frac{\log \log n}{n} \right) \ a.s.
\end{equation}
\end{prop}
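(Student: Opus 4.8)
The plan is to reduce the statement to a law of the iterated logarithm (LIL) for a scalar martingale, after isolating the martingale that governs the estimation error.

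First I would record the martingale decomposition of $\widehat{\theta}_n-\theta$. By the block form of $\widetilde{A}_{i-1}$ and the definition $a_{i-1}=(Id_p,\beta_{i-1}Id_p)^*$, the first row of $\widetilde{A}_{i-1}$ equals $\theta^*a_{i-1}^*$, so that $\ell^*\zeta_i=\theta^*a_{i-1}^*\zeta_{i-1}+\sigma_i\varepsilon_i$. Inserting this identity into the explicit expression for $\widehat{\theta}_n$ gives
\[
\widehat{\theta}_n-\theta=S_n^{-1}M_n,\qquad S_n=\sum_{i=1}^n\frac{a_{i-1}^*\zeta_{i-1}\zeta_{i-1}^*a_{i-1}}{\sigma_i^2},\qquad M_n=\sum_{i=1}^n\frac{a_{i-1}^*\zeta_{i-1}}{\sigma_i}\varepsilon_i,
\]
where $(M_n)$ is an $\dR^p$-valued martingale for the natural filtration $(\cF_n)$. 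Since $\dE[\varepsilon_i^2\mid\cF_{i-1}]=1$, its predictable quadratic variation is exactly $\langle M\rangle_n=S_n$.

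Second, I would use the almost sure convergence $\tfrac{1}{n}S_n\xrightarrow[n\to\infty]{a.s.}I(\theta)$, obtained in the proof of Theorem \ref{Theo2_LFQ_P2} through the a.s. convergence of the filtered process. Fixing $v\in\dR^p\setminus\{0\}$ and setting $u=I(\theta)^{-1}v$, the scalar martingale $N_n:=u^*M_n$ has predictable quadratic variation $\langle N\rangle_n=u^*S_nu$, whence
\[
\langle N\rangle_n\sim n\,u^*I(\theta)u=n\,v^*I(\theta)^{-1}v\quad a.s.,
\]
which tends to infinity since $I(\theta)$ is positive definite. The core of the argument is then the martingale LIL (as in Stout or Duflo) applied to $(N_n)$, which under the divergence of $\langle N\rangle_n$ and a control of the conditional increments yields
\[
\limsup_{n\to\infty}\frac{N_n}{\sqrt{2\langle N\rangle_n\log\log\langle N\rangle_n}}=-\liminf_{n\to\infty}\frac{N_n}{\sqrt{2\langle N\rangle_n\log\log\langle N\rangle_n}}=1\quad a.s.
\]
Because $\langle N\rangle_n\sim n\,v^*I(\theta)^{-1}v$ and $\log\log\langle N\rangle_n\sim\log\log n$, this rewrites as $\limsup_n N_n/\sqrt{2n\log\log n}=\sqrt{v^*I(\theta)^{-1}v}$ a.s., and symmetrically for the lower limit.

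Finally, I would transfer the limit from $N_n$ to $v^*(\widehat{\theta}_n-\theta)$ via
\[
v^*(\widehat{\theta}_n-\theta)=\frac{1}{n}N_n+v^*\Big(S_n^{-1}-\frac{1}{n}I(\theta)^{-1}\Big)M_n.
\]
From $\tfrac1n S_n\to I(\theta)$ one gets $nS_n^{-1}\to I(\theta)^{-1}$, hence $S_n^{-1}-\tfrac1n I(\theta)^{-1}=o(1/n)$ a.s. in the matrix norm $\Vert\cdot\Vert$, while the LIL applied coordinatewise gives $\Vert M_n\Vert=O(\sqrt{n\log\log n})$ a.s.; the remainder is therefore $o\big(\sqrt{\log\log n/n}\,\big)$ and its contribution vanishes after multiplication by $(n/2\log\log n)^{1/2}$. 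Combining with the previous step gives the stated limsup and liminf, and taking $v=e_1,\dots,e_p$ and summing the resulting bounds $|e_j^*(\widehat{\theta}_n-\theta)|=O(\sqrt{\log\log n/n})$ yields \eqref{vitesse_MLE}. The main obstacle is the verification of the increment hypothesis of the martingale LIL: the process $(\zeta_n)$ is not stationary, since $\widetilde{A}_n$ depends on $n$ through $\beta_n$, so ergodicity is unavailable and one must instead exploit $\beta_n\to0$ (using $(H_3)$, which also guarantees $\sigma_n^2=\prod_{k=1}^{n-1}(1-\beta_k^2)$ stays bounded away from $0$) together with the dedicated martingale method of Section \ref{Sec4} to control the regressors $a_{i-1}^*\zeta_{i-1}$ and the growth of $(M_n)$.
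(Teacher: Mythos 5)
Your overall architecture is the same as the paper's: you use the decomposition $\widehat{\theta}_n-\theta=\langle M\rangle_n^{-1}M_n$ of (\ref{decompo_MLE_P1}), the almost sure convergence $\langle M\rangle_n/n\to I(\theta)$ (which the paper establishes as (\ref{conv_crochet_ps}) in the proof of Theorem \ref{Theo1_P2}, not of Theorem \ref{Theo2_LFQ_P2}), a martingale law of the iterated logarithm, and a transfer of the limit from the martingale to the estimator. Your transfer algebra is fine: from $nS_n^{-1}\to I(\theta)^{-1}$ a.s. and $\Vert M_n\Vert=O(\sqrt{n\log\log n})$ a.s., the remainder is indeed $o\big(\sqrt{\log\log n/n}\big)$, and the scalar reduction $u=I(\theta)^{-1}v$ gives the correct constant $\sqrt{v^*I(\theta)^{-1}v}$.

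The genuine gap is the step you yourself flag as ``the main obstacle'': the verification of the increment hypothesis of the martingale LIL, which you do not carry out, and which is the entire technical content of the paper's proof (Section \ref{Preuve_3.1}). The increments $\Delta N_n=u^*a_{n-1}^*\zeta_{n-1}\varepsilon_n/\sigma_n$ are conditionally Gaussian, hence unbounded, so Stout's bounded-increment LIL does not apply as stated, and the exact-constant LIL is simply false for martingales satisfying only $\langle N\rangle_n\to\infty$. The paper instead invokes Lemma C.2 of \cite{Ber98}, whose extra hypothesis is the a.s. summability $\sum_{n\geqslant 1}\left(\Vert a_n^*\zeta_n\Vert/(\vert\sigma_{n+1}\vert\sqrt{n})\right)^{\beta}<\infty$ for some $\beta>2$. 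Moreover, your closing hint about how one would fill the gap points in the wrong direction: you assert that ``ergodicity is unavailable'' because $(\zeta_n)$ is inhomogeneous, but the paper's verification uses ergodicity precisely. By Proposition \ref{trajectoire_zeta_P2} and Lemma \ref{lem_Tech3_P4} one writes $a_n^*\zeta_n=\gamma_n+\tau_n$ with $\tau_n\to 0$ a.s.; since $(\gamma_n)$ is a Gaussian ergodic process, $\Vert\gamma_n\Vert=O(n^{\alpha})$ a.s. for every $0<\alpha<\frac{1}{2}$, and with $\vert\sigma_n\vert$ bounded below by some $\widetilde{m}>0$ (thanks to (\ref{var_asympto_innov_P1})) the series converges as soon as $\beta>\frac{2}{1-2\alpha}>2$. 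Without this verification, or an equivalent quantitative control of the regressors, your appeal to a generic martingale LIL is unjustified.
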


\begin{proof}
See Section \ref{Preuve_3.1}.
\end{proof}

\subsection{Local asymptotic normality property and application.}\label{Sec2.2}

\

\

The LAN (local asymptotic normality) property is an important notion under which we can define a notion of asymptotic efficiency for estimators (see \cite{LeCam1990}). Before stating the results, we remind for the reader's convenience some properties and definitions under LAN statistical experiments. The LAN property for stationary Gaussian process was obtained in \cite{CGL13} with conditions on the spectral density. We present here direct computation based on the particular autoregressive structure in order to obtain the LAN property.

\begin{defi}\label{LAN_property_P3}
We will say that a familly of measures $\mathbb{P}_{\theta}^{(n)}$ is LAN in $\theta_0 \in M \subset \mathbb{R}^d$ if the following conditions are satisfied about the likelihood ratio,

\begin{equation}\label{LAN_likelihood}
L_n(u) = \frac{d\mathbb{P}_{\theta_0+\phi_n (\theta_0)u}^{(n)}}{d\mathbb{P}_{\theta_0}^{(n)}},
\end{equation}

\begin{equation}\label{LAN_repre_P3}
L_n(u)=exp\left(\langle u,Z_n(\theta_0) \rangle - \frac{1}{2}\langle u, J(\theta_0)u \rangle + R_n(\theta_0,u)\right),
\end{equation}

\noindent where

\begin{equation}\label{score_LAN}
{Z_n(\theta_0)}\xrightarrow[n\to\infty]{\mathcal{L}}{\mathcal{N}(0,J(\theta_0))},
\end{equation}

\noindent and,

\begin{equation}\label{reste_LAN}
{R_n(\theta_0,u)}\xrightarrow[n\to\infty]{\mathcal{L}}0
\end{equation}

\noindent under $\mathbb{P}_{\theta_0}^{(n)}.$

\

\noindent The sequence $(\phi_n(\theta_0))$ satisfied

\begin{equation}\label{LAN_matrix_rate}
{\phi_n(\theta_0)}\xrightarrow[n\to\infty]{}0.
\end{equation}

\noindent In this definition $u \in K \subset \mathbb{R}^d$, $\phi_n (\theta_0)$ are non-singular matrix rate and $J(\theta_0)$ is a non singular $d \times d$ matrix.
\end{defi}

\begin{theo}\label{theo_inf_P3}
Suppose that the family of measures $\left\lbrace \mathbb{P}_{\theta}^{(n)},\theta \in M \subset \mathbb{R}^d \right\rbrace$ is LAN in $\theta_0$. Then for any $\delta >0$, 

\begin{equation}\label{borne_LAN_P3}
\liminf\limits_{n \rightarrow \infty} \  \sup\limits_{ \parallel \phi_n(\theta_0)^{-1}(\theta-\theta_0) \parallel \leqslant \delta} \ 
\mathbb{E}_{\theta_0}^{(n)}\left
 (f\left(\phi_n(\theta_0)^{-1}(\widehat{\theta}_n-\theta)\right)   \right) \geqslant 
{ \int_{\mathbb{R}^m}^{} {f\left(J(\theta_0)^{-\frac{1}{2}}x\right)\Phi_d(x)} \, \mathrm{d}x},
\end{equation}

\noindent for any estimator $\widehat{\theta}_n$ and for any cost function $f$ such that $f$ is continuous, symmetric, quasi-convex and ${f(z)exp(-\frac{\parallel z \parallel^2}{2})}\rightarrow 0$ when $\Vert z \Vert \rightarrow \infty.$ Here $\Phi_d$ is the density of the standard $d$-dimensionnal Gaussian distribution. 
\end{theo}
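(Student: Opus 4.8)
The statement is the Hájek--Le Cam local asymptotic minimax theorem, and the plan is to reduce the localized risk to the minimax risk of the limiting Gaussian shift experiment and then to evaluate the latter via Anderson's lemma. First I would localize: writing $u=\phi_n(\theta_0)^{-1}(\theta-\theta_0)$, the supremum runs over the ball $\{\|u\|\le\delta\}$, and $\phi_n(\theta_0)^{-1}(\widehat\theta_n-\theta)=\widehat u_n-u$ with $\widehat u_n:=\phi_n(\theta_0)^{-1}(\widehat\theta_n-\theta_0)$, the risk being understood at the running parameter $\theta=\theta_0+\phi_n(\theta_0)u$. The expansion (\ref{LAN_repre_P3}) together with (\ref{score_LAN})--(\ref{reste_LAN}) gives, under $\mathbb{P}_{\theta_0}^{(n)}$,
\[
\log L_n(u)=\langle u,Z_n(\theta_0)\rangle-\tfrac12\langle u,J(\theta_0)u\rangle+R_n(\theta_0,u)\ \xrightarrow{\mathcal{L}}\ \langle u,Z\rangle-\tfrac12\langle u,J(\theta_0)u\rangle,\quad Z\sim\mathcal{N}(0,J(\theta_0)).
\]
Since the right-hand side is exactly the log-likelihood ratio of the Gaussian shift model in which one observes $S\sim\mathcal{N}(u,J(\theta_0)^{-1})$, the finite-dimensional convergence of the ratios shows that the localized experiments converge, in Le Cam's sense, to this Gaussian shift experiment $\mathcal{G}$. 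In particular Le Cam's first lemma yields mutual contiguity of $\mathbb{P}^{(n)}_{\theta_0+\phi_n(\theta_0)u}$ and $\mathbb{P}^{(n)}_{\theta_0}$ for each fixed $u$.

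Next I would transfer the bound from $\mathcal{G}$ to the sequence. Replacing the supremum over the ball by a Bayes average against a prior $\pi$ supported on $\{\|u\|\le\delta\}$ only decreases it, so it suffices to bound the asymptotic Bayes risk from below. Using the contiguity above together with the asymptotic representation theorem, any estimator sequence $\widehat u_n$ has, along a subsequence, a limiting (possibly randomized) estimator $\widetilde u$ in $\mathcal{G}$, the \emph{same} for every $u$, and lower semicontinuity of the risk forces
\[
\liminf_{n\to\infty}\ \sup_{\|u\|\le\delta}\mathbb{E}^{(n)}_{\theta_0+\phi_n(\theta_0)u}\!\left[f(\widehat u_n-u)\right]\ \ge\ \inf_{\widetilde u}\ \sup_{u}\ \mathbb{E}_u\!\left[f(\widetilde u-u)\right],
\]
the infimum running over all estimators in $\mathcal{G}$. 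The right-hand side is the minimax risk of the limit experiment, which I would reach by letting the prior spread to the least-favorable (flat) limit.

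It then remains to evaluate this minimax risk. In $\mathcal{G}$ the natural estimator $\widetilde u=S$ has error $S-u\sim\mathcal{N}(0,J(\theta_0)^{-1})$, independent of $u$. By Anderson's lemma, since $f$ is symmetric and quasi-convex its sublevel sets are symmetric and convex, so for $Y\sim\mathcal{N}(0,J(\theta_0)^{-1})$ and every shift $a$ one has $\mathbb{E}[f(Y+a)]\ge\mathbb{E}[f(Y)]$; this shows $S$ is least-favorable and the minimax value equals $\mathbb{E}[f(Y)]$. Writing $Y=J(\theta_0)^{-1/2}X$ with $X\sim\mathcal{N}(0,Id_p)$ gives $\mathbb{E}[f(Y)]=\int f\!\left(J(\theta_0)^{-1/2}x\right)\Phi_d(x)\,\mathrm{d}x$, which is the right-hand side of (\ref{borne_LAN_P3}).

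The main obstacle is the transfer step: justifying that weak convergence of the localized laws yields convergence of the \emph{risk integrals} $\mathbb{E}[f(\cdot)]$ and not merely of bounded functionals. I would first truncate, proving the bound for the bounded bowl-shaped functions $f\wedge M$ by a portmanteau argument applied to the weakly convergent distributions of $\widehat u_n-u$, and then let $M\to\infty$. The hypothesis $f(z)\exp(-\|z\|^2/2)\to 0$ is precisely what is needed here: it dominates $f$ by the Gaussian tails of the limit law and supplies the uniform integrability that lets the truncation be removed, so that no mass escapes and the limit is exactly the Gaussian integral. Some care is also required to accommodate the randomization furnished by the representation theorem, but this is routine once the uniform integrability is in place.
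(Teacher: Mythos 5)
The paper does not actually prove this statement: it is the classical H\'ajek--Le Cam local asymptotic minimax theorem, and the text simply points to Theorem 12.1, Chapter 2 of \cite{IK81}. So any self-contained argument you give is necessarily a ``different route'' from the paper. Your outline is the standard modern one --- localization, convergence of the localized experiments to the Gaussian shift experiment $\mathcal{G}$, the asymptotic representation theorem, Anderson's lemma, and a truncation to pass from bounded to unbounded loss --- i.e.\ the convergence-of-experiments proof found in modern textbooks. Ibragimov--Khasminskii instead argue more directly: they minorize the supremum by a Bayes risk over the shrinking ball, use the LAN expansion to control the posterior, and apply Anderson's lemma there; their method is more computational but is precisely where the growth hypothesis on $f$ gets consumed. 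Your route buys conceptual clarity and reusability; theirs avoids the abstract representation machinery.

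Two joints of your sketch are loose and would need tightening before this is a proof. First, the way you invoke Anderson's lemma --- $\mathbb{E}[f(Y+a)]\geqslant \mathbb{E}[f(Y)]$ for $Y\sim\mathcal{N}(0,J(\theta_0)^{-1})$ --- only shows that the particular estimator $S$ has constant risk $\mathbb{E}[f(Y)]$, i.e.\ it yields the \emph{upper} bound $\inf\sup \leqslant \mathbb{E}[f(Y)]$. The inequality you actually need is that \emph{every} randomized estimator in $\mathcal{G}$ has supremum risk at least $\mathbb{E}[f(Y)]$; an arbitrary competitor does not have a shifted-Gaussian error law, so Anderson's lemma cannot be applied to it directly. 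The repair is the Bayes argument you allude to but do not execute: take a Gaussian prior $\mathcal{N}(0,\tau^2 Id_d)$ on $u$, note that the posterior of $u$ given $S$ is Gaussian, apply Anderson's lemma \emph{conditionally to that posterior} to bound the Bayes risk below by $\mathbb{E}[f(\mathcal{N}(0,\Sigma_\tau))]$, and let $\tau\to\infty$. Second, your reading of the hypothesis $f(z)\exp(-\Vert z\Vert^2/2)\to 0$ as supplying uniform integrability is off target: for a \emph{lower} bound, the truncation $f\wedge M$ is removed by monotone convergence of $\int (f\wedge M)\bigl(J(\theta_0)^{-1/2}x\bigr)\Phi_d(x)\,\mathrm{d}x$ as $M\to\infty$; no uniform integrability is needed. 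That growth condition is rather what makes the Ibragimov--Khasminskii computation (and the finiteness of the right-hand side) work. Finally, your subsequence/representation step tacitly assumes tightness of $\widehat u_n-u$ under the local laws; the standard fix is to compactify $\mathbb{R}^d$ and use that a bounded bowl-shaped loss extends lower semicontinuously, so that mass escaping to infinity can only increase the risk.
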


\noindent For a proof of the last result see Theorem 12.1, chapter 2 in \cite{IK81}.
We can now give the LAN property in the model that interests us.

\begin{theo}\label{Theo_LAN_AR(p)_P3}
With the notation of Definition \ref{LAN_property_P3} and denoting $\phi_n (\theta_0)=\frac{1}{\sqrt{n}}Id_p$ we have
\begin{equation}\label{LAN_AR(p)_P3}
\log \left( \frac{\mathcal{L}(\theta_0+\phi_n (\theta_0)u,X^{(n)})}{\mathcal{L}(\theta_0,X^{(n)})} \right) = \langle u, \frac{M_n}{\sqrt{n}}\rangle - \frac{1}{2}\langle u,I(\theta_0)u \rangle + R_n(\theta_0,u),
\end{equation}
where $(\frac{M_n}{\sqrt{n}})$ satisfied condition (\ref{score_LAN}) under $\mathbb{P}_{\theta_0}^{(n)}$ with $J(\theta_0)=I(\theta_0)$ and $R_n(\theta_0,u)$ satisfied condition (\ref{reste_LAN}) under $\mathbb{P}_{\theta_0}^{(n)}.$
In this Theorem, $u \in B(0;R)$ for any $R>0$.
\end{theo}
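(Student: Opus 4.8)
The plan is to compute the log-likelihood ratio directly from (\ref{log_vraisemblance_P1}) and to read off the three ingredients of (\ref{LAN_repre_P3}). The starting observation is that neither the innovation variances $\sigma_i^2$ nor the PACF coefficients $\beta_i$ depend on $\theta$: the parameter enters the likelihood only through the first row of $A_0$ inside $\widetilde{A}_{i-1}$. Exploiting the block structure of $\widetilde{A}_{i-1}$ together with the definition $a_{i-1}=(Id_p,\beta_{i-1}Id_p)^*$, the first $p$-block of $\widetilde{A}_{i-1}\zeta_{i-1}$ equals $A_0\,a_{i-1}^*\zeta_{i-1}$, whence
\[
\ell^*\bigl(\zeta_i-\widetilde{A}_{i-1}\zeta_{i-1}\bigr)=\ell^*\zeta_i-\theta^*a_{i-1}^*\zeta_{i-1}.
\]
Under $\mathbb{P}_{\theta_0}^{(n)}$ this quantity equals $\sigma_i\varepsilon_i$ by (\ref{nouveau_zeta_P1}). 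Since the additive terms in (\ref{log_vraisemblance_P1}) that do not involve $\theta$ cancel in the ratio, only the sum of squared innovations survives.

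Writing $\theta_n=\theta_0+u/\sqrt{n}$ and $V_{i-1}=a_{i-1}^*\zeta_{i-1}$, the innovation at $\theta_n$ is $\sigma_i\varepsilon_i-u^*V_{i-1}/\sqrt{n}$, and expanding the difference of squares gives
\[
\log\left(\frac{\mathcal{L}(\theta_n,X^{(n)})}{\mathcal{L}(\theta_0,X^{(n)})}\right)=\left\langle u,\frac{M_n}{\sqrt{n}}\right\rangle-\frac{1}{2}\,u^*\Bigl(\frac{1}{n}\sum_{i=1}^{n}\frac{V_{i-1}V_{i-1}^*}{\sigma_i^2}\Bigr)u,
\]
with $M_n=\sum_{i=1}^{n}\tfrac{\varepsilon_i}{\sigma_i}V_{i-1}=\sum_{i=1}^{n}\tfrac{1}{\sigma_i^2}a_{i-1}^*\zeta_{i-1}\,\ell^*(\zeta_i-\widetilde{A}_{i-1}\zeta_{i-1})$. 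Matching with (\ref{LAN_repre_P3}), I would set
\[
R_n(\theta_0,u)=-\frac{1}{2}\,u^*\Bigl(\frac{1}{n}\sum_{i=1}^{n}\frac{a_{i-1}^*\zeta_{i-1}\zeta_{i-1}^*a_{i-1}}{\sigma_i^2}-I(\theta_0)\Bigr)u,
\]
so that the quadratic term reduces to $-\tfrac12\langle u,I(\theta_0)u\rangle$ and the remainder carries exactly the fluctuation of the empirical information around its limit.

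It remains to establish the two limit laws. For the score, $M_n$ is a martingale for the natural filtration $\cF_n=\sigma(\xi_1,\dots,\xi_n)$, since $\varepsilon_i\sim\mathcal{N}(0,1)$ is independent of $\cF_{i-1}$ while $V_{i-1}$ is $\cF_{i-1}$-measurable; its predictable quadratic variation is $\langle M\rangle_n=\sum_{i=1}^{n}V_{i-1}V_{i-1}^*/\sigma_i^2$. Granting $\tfrac1n\langle M\rangle_n\to I(\theta_0)$ together with the Lindeberg condition (immediate here from the conditional Gaussianity of the increments and the $n^{-1/2}$ normalization), the multivariate martingale central limit theorem yields $M_n/\sqrt{n}\xrightarrow{\mathcal{L}}\mathcal{N}(0,I(\theta_0))$, i.e. (\ref{score_LAN}) with $J(\theta_0)=I(\theta_0)$. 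The same convergence of the empirical information forces $R_n(\theta_0,u)\to0$; being a quadratic form in $u$ with coefficient matrix tending to $0$, it vanishes uniformly over $u\in B(0;R)$, which gives (\ref{reste_LAN}).

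The main obstacle is the convergence $\tfrac1n\sum_{i=1}^{n}a_{i-1}^*\zeta_{i-1}\zeta_{i-1}^*a_{i-1}/\sigma_i^2\to I(\theta_0)$. As emphasized in the introduction, the process $(\zeta_n)$ is not genuinely ergodic, because the coefficients $\beta_n$ in $\widetilde{A}_n$ vary with $n$ and $(\zeta_n)$ fails to be strictly stationary; the usual ergodic theorem therefore does not apply, and one must lean on the substitute argument of Section \ref{Sec4} underlying the almost sure results of Section \ref{Sec2.1}. Granting that limit, which I would cite rather than reprove, both the LAN expansion and the two convergences follow mechanically from the computation above.
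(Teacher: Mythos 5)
Your proof is correct, and its core is the same computation as the paper's: the identity $\ell^*(\zeta_i-\widetilde{A}_{i-1}\zeta_{i-1})=\ell^*\zeta_i-\theta^*a_{i-1}^*\zeta_{i-1}$, the difference-of-squares expansion yielding $\langle u,\frac{M_n}{\sqrt{n}}\rangle-\frac{1}{2}\langle u,\frac{\langle M\rangle_n}{n}u\rangle$, and the choice $R_n(\theta_0,u)=-\frac{1}{2}\langle u,(\frac{\langle M\rangle_n}{n}-I(\theta_0))u\rangle$, bounded uniformly on $B(0;R)$ by $\frac{1}{2}R^2\Vert \frac{\langle M\rangle_n}{n}-I(\theta_0)\Vert$, are exactly what the paper does (the paper encodes the linearity in $\theta$ through the matrix $U_n$ with $\ell^*U_n=u^*a_n^*$ and factors the difference of squares, but that is cosmetic). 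Where you diverge is the final step, and there your route is heavier than necessary. The paper finishes in one line by citing the facts recalled in Section \ref{Sec4} from \cite{BCK14}: the convergence in probability (\ref{proba_inf_Fisher_P1}) of $\frac{\langle M\rangle_n}{n}$ to $I(\theta_0)$ kills $R_n$, and (\ref{loi_martingale_P1}) is precisely the statement that $\frac{M_n}{\sqrt{n}}$ satisfies (\ref{score_LAN}) with $J(\theta_0)=I(\theta_0)$. Since the LAN property only requires convergence in law and in probability, nothing almost sure is needed, and your concern about the lack of ergodicity and the substitute machinery of Sections \ref{Sec4} and \ref{Sec2.1} is beside the point here: the in-probability limit is available by citation without any of it. By contrast, your plan re-derives the score CLT via the multivariate martingale CLT while declaring the Lindeberg condition ``immediate from conditional Gaussianity''; that is an overstatement. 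With increments $\Delta M_i=a_{i-1}^*\zeta_{i-1}\varepsilon_i/\sigma_i$, the truncation involves the random factor $\Vert a_{i-1}^*\zeta_{i-1}\Vert$, and controlling it is exactly the laborious part of this model --- the paper's own verification of the analogous condition (\ref{Lindeberg_condition}) in the proof of Theorem \ref{Theo2_LFQ_P2} takes a full page and relies on Proposition \ref{trajectoire_zeta_P2} and Lemma \ref{lem_Tech3_P4}. Your argument can be completed (e.g., deduce $\max_{i\leqslant n}\Vert a_{i-1}^*\zeta_{i-1}\Vert^2/n\to 0$ from the bracket convergence and then use the Gaussian tail), but as written this step is a gap in rigor; the cleanest fix is simply to replace that paragraph by citations of (\ref{proba_inf_Fisher_P1}) and (\ref{loi_martingale_P1}), which is the paper's proof.
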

\begin{proof}
See Section \ref{Preuve_4}.
\end{proof}

\noindent We are now in position to give a result concerning the asymptotic efficiency of the MLE.

\begin{prop}\label{eff_MLE_P3}
Under $(H_1)$ and $(H_2),$ the MLE is asymptotically efficient, more precicely the lower-bound given by the Theorem \ref{theo_inf_P3} is reached for the MLE.
\end{prop}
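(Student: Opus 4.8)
The plan is to show that the Hájek--Le Cam lower bound appearing on the right-hand side of (\ref{borne_LAN_P3}) is attained as an equality when $\widehat{\theta}_n$ is the MLE. Since $\Phi_d$ is the standard Gaussian density and $J(\theta_0)=I(\theta_0)$ by Theorem \ref{Theo_LAN_AR(p)_P3}, the change of variables $x\mapsto I(\theta_0)^{-\frac12}x$ identifies that right-hand side with $\mathbb{E}\!\left[f(W)\right]$, where $W\sim\mathcal{N}(0,I(\theta_0)^{-1})$. Moreover, since $\phi_n(\theta_0)=n^{-\frac12}Id_p$, writing the local parameter as $\theta=\theta_0+\phi_n(\theta_0)u$ gives $\phi_n(\theta_0)^{-1}(\widehat{\theta}_n-\theta)=\sqrt{n}\,(\widehat{\theta}_n-\theta_0)-u$. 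It therefore suffices to prove that the local $f$-risk of the MLE converges, uniformly in $u$ over $\|u\|\le\delta$, to $\mathbb{E}[f(W)]$.

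The first step is the asymptotic linearity of the MLE. Maximising in $u$ the quadratic principal part of the LAN expansion (\ref{LAN_AR(p)_P3}) produces the candidate maximiser $I(\theta_0)^{-1}M_n/\sqrt{n}$. Using the strong consistency (\ref{strong_MLE}) to localise $\widehat{\theta}_n$ near $\theta_0$, and controlling $R_n(\theta_0,u)$ uniformly for $u$ in compact sets, I would deduce the stochastic expansion
\begin{equation*}
\sqrt{n}\,(\widehat{\theta}_n-\theta_0)=I(\theta_0)^{-1}\frac{M_n}{\sqrt{n}}+o_{\mathbb{P}_{\theta_0}^{(n)}}(1),
\end{equation*}
which, combined with (\ref{score_LAN}), recovers the limit law $\mathcal{N}(0,I(\theta_0)^{-1})$ of (\ref{loi_est1_P1}). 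Next I would establish the regularity of the MLE via Le Cam's third lemma. The expansion above together with (\ref{LAN_AR(p)_P3}) yields the joint weak limit, under $\mathbb{P}_{\theta_0}^{(n)}$, of the pair $\bigl(\sqrt{n}(\widehat{\theta}_n-\theta_0),\,\log L_n(u)\bigr)$ as the Gaussian vector $\bigl(I(\theta_0)^{-1}Z,\,\langle u,Z\rangle-\tfrac12\langle u,I(\theta_0)u\rangle\bigr)$ with $Z\sim\mathcal{N}(0,I(\theta_0))$. Le Cam's third lemma then transports this to the shifted family and gives
\begin{equation*}
\sqrt{n}\,(\widehat{\theta}_n-\theta_0)-u \xrightarrow[n\to\infty]{\mathcal{L}} \mathcal{N}\!\left(0,I(\theta_0)^{-1}\right)\quad\text{under }\ \mathbb{P}_{\theta_0+\phi_n(\theta_0)u}^{(n)},
\end{equation*}
for every fixed $u$, the limit being independent of $u$.

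The main obstacle is to upgrade this convergence in law into convergence of the $f$-risks, and to control the supremum over $\|u\|\le\delta$ uniformly. Weak convergence together with continuity of $f$ controls only bounded functionals, whereas $f$ may grow subexponentially, so the decisive point is to prove uniform integrability of $f\!\left(\sqrt{n}(\widehat{\theta}_n-\theta_0)-u\right)$ under the measures $\mathbb{P}_{\theta_0+\phi_n(\theta_0)u}^{(n)}$. I would obtain this from the growth hypothesis $f(z)\exp(-\tfrac{\|z\|^2}{2})\to 0$ combined with Gaussian-type tail bounds for the normalised MLE, derived in the Ibragimov--Khasminskii manner (\cite{IK81}) from exponential moment estimates on the normalised likelihood-ratio field $u\mapsto L_n(u)$; here the local quadratic structure of (\ref{LAN_AR(p)_P3}) and the explicit form of $\widehat{\theta}_n$ are what make such estimates tractable.

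Granting uniform integrability, regularity gives, for each fixed $u$,
\begin{equation*}
\mathbb{E}_{\theta_0+\phi_n(\theta_0)u}^{(n)}\!\left[f\!\left(\sqrt{n}(\widehat{\theta}_n-\theta_0)-u\right)\right]\xrightarrow[n\to\infty]{}\mathbb{E}[f(W)].
\end{equation*}
Since the limit does not depend on $u$, a compactness argument over the ball $\|u\|\le\delta$ lets the inner supremum and the outer $\liminf$ in (\ref{borne_LAN_P3}) both converge to the single constant $\mathbb{E}[f(W)]$; the symmetry and quasi-convexity of $f$, via Anderson's lemma, confirm that this common value is exactly the minimax bound. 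Hence the lower bound of Theorem \ref{theo_inf_P3} is reached for the MLE, which is therefore asymptotically efficient. I expect the uniform-integrability and the uniformity-in-$u$ steps to be the genuinely delicate parts, while the asymptotic linearity and the application of Le Cam's third lemma are comparatively routine given the already established LAN property and the weak limit (\ref{loi_est1_P1}).
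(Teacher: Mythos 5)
Your pivot is the right one: the asymptotic linearity $\sqrt{n}(\widehat{\theta}_n-\theta_0)=I(\theta_0)^{-1}\frac{M_n}{\sqrt{n}}+o_{\mathbb{P}_{\theta_0}^{(n)}}(1)$ is exactly the condition (\ref{Effi_MLE}) on which the paper's proof rests. Note, however, that in this model no argmax/localisation argument is needed to obtain it (and strong consistency alone would not localise $\widehat{\theta}_n$ at the $n^{-1/2}$ scale in any case): the log-likelihood is exactly quadratic in $\theta$, so by (\ref{decompo_MLE_P1}) one has the identity $\sqrt{n}(\widehat{\theta}_n-\theta_0)=\left(n^{-1}\langle M\rangle_n\right)^{-1}\frac{M_n}{\sqrt{n}}$, and linearity follows at once from (\ref{proba_inf_Fisher_P1}) together with the tightness of $\frac{M_n}{\sqrt{n}}$ given by (\ref{loi_martingale_P1}). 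Your application of Le Cam's third lemma to get regularity of the MLE under the local alternatives is also correct.

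The genuine gap lies in what you do after that. The paper, having Theorem \ref{Theo_LAN_AR(p)_P3} and (\ref{Effi_MLE}), simply invokes Lemma 13.1, chapter 2 of \cite{IK81}: for a LAN family, any estimator asymptotically equivalent to the central sequence attains the bound of Theorem \ref{theo_inf_P3} for every admissible loss $f$. You instead set out to re-prove that lemma, and the two steps you yourself call ``genuinely delicate'' are left as declared intentions rather than proofs. Weak convergence plus continuity of $f$ handles only bounded losses; for $f$ growing as fast as the condition $f(z)e^{-\|z\|^2/2}\to 0$ allows, you need uniform integrability of $f\left(\sqrt{n}(\widehat{\theta}_n-\theta_0)-u\right)$ under $\mathbb{P}_{\theta_0+\phi_n(\theta_0)u}^{(n)}$, uniformly in $n$ and in $\|u\|\leqslant\delta$ --- essentially sub-Gaussian tail bounds, uniform in $n$, for $\left(n^{-1}\langle M\rangle_n\right)^{-1}\frac{M_n}{\sqrt{n}}$. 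The ``exponential moment estimates on the normalised likelihood-ratio field'' you appeal to are precisely the hypothesis-heavy Ibragimov--Khasminskii machinery that this paper never verifies and that is far from routine here: it would require Gaussian-type control of the probability that the empirical information $n^{-1}\langle M\rangle_n$ is near-singular, for a filtered, inhomogeneous, dependent regressor process. In addition, your uniformity-in-$u$ step is unsupported: pointwise convergence of the risks to a constant on the compact ball $\|u\|\leqslant\delta$ does not imply convergence of the supremum without some equicontinuity in $u$. (The invocation of Anderson's lemma at the end is also superfluous: it belongs to the proof of the lower bound, not to its attainment.) As written, the decisive analytic step of your argument is missing; the citation of Lemma 13.1 of \cite{IK81} is exactly what closes it in the paper, reducing the whole proposition to the one-line verification of (\ref{Effi_MLE}).
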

\begin{proof}
See Section \ref{Preuve_5}.
\end{proof}

\noindent We will now focus on the optimality of the multidimensional hypotheses test in the autoregressive setting. Always for reader's convenience we recall notions and results on the tests which were introduced in \cite{CHA96}. Suppose that the familly of measures $\mathbb{P}_{\theta}^{(n)}$ is LAN in $\theta_0$. We would like to build an optimal procedure to test $\theta = \theta_0$ against $\theta \neq \theta_0.$

\begin{defi}\label{def_AUMP_P3} A test $\phi_n^1$ is said $AUMP(\alpha)$ (asymptotically uniformly most powerful of level $\alpha$) if
\begin{equation}\label{ineg_AUMP1_P3}
\limsup\limits_{n \to \infty}  \ \mathbb{E}_{\theta_0}^{(n)}(\phi_n^1) \leqslant \alpha,
\end{equation}
and for any other test $\phi_n^2$ of asymptotic level $\alpha$,  
\begin{equation}\label{ineg_AUMP2_P3}
\limsup\limits_{n \to \infty} \ \mathbb{E}_{\theta_0+\phi_n(\theta_0)u}^{(n)} (\phi_n^2) \leqslant \liminf\limits_{n \to \infty}  \ \mathbb{E}_{\theta_0+\phi_n(\theta_0)u}^{(n)} (\phi_n^1).
\end{equation}
\end{defi}

\begin{rem}\label{Rem4}
We give a lemma set in \cite{CHA96} in order to formalize the next  definition. We formulate this lemma in our context, i.e. without the parameters of nuisance since in our case, it is possible to compute them ($via$ the Durbin-Levinson algorithm).
\end{rem}

\begin{lem}\label{lim_test_P3} With the notation of Definition \ref{LAN_property_P3}, for every test $\phi_n^1$ and every subsequence $n'$, we cand find a subsequence $n''$ of $n'$ and a test $\phi$ from $\mathbb{R}^p$ to $[0;1]$ such that for every $u \in K$,
\begin{equation}\label{lim2_test_P3}
\lim\limits_{n'' \to \infty} \ \mathbb{E}_{\theta_0+\phi_{n''}  (\theta_0)u}^{(n'')} (\phi_{n''}^1) = {\int_{\mathbb{R}^d}^{} {\phi(x)\Phi_d(x-J(\theta_0)u)}\, \mathrm{d}x },
\end{equation}
where $\Phi_d$ is defined as in Theorem \ref{theo_inf_P3}.
\end{lem}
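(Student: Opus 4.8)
The plan is to prove this subsequence statement in three moves: first realize the limiting test $\phi$ as a weak limit (along a subsequence) of the given tests $\phi_{n}^{1}$ seen through the score $Z_n(\theta_0)$; second transfer the null computation to the local alternatives via the LAN expansion $(\ref{LAN_repre_P3})$; third check that the resulting Gaussian integral is exactly the right-hand side of $(\ref{lim2_test_P3})$.

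First I would rewrite the power under the alternative as an expectation under the null. Since $L_{n}(u)$ defined in $(\ref{LAN_likelihood})$ is the likelihood ratio,
\[
\mathbb{E}_{\theta_0+\phi_n(\theta_0)u}^{(n)}(\phi_n^1)=\mathbb{E}_{\theta_0}^{(n)}\big(\phi_n^1\,L_n(u)\big)=\mathbb{E}_{\theta_0}^{(n)}\Big(\phi_n^1\exp\big(\langle u,Z_n(\theta_0)\rangle-\tfrac12\langle u,J(\theta_0)u\rangle+R_n(\theta_0,u)\big)\Big).
\]
Everything is thus governed by the joint behaviour of $(\phi_n^1,Z_n(\theta_0))$ under $\mathbb{P}_{\theta_0}^{(n)}$. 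To capture it I introduce the sub-probability measures on $\mathbb{R}^d$
\[
\nu_n(A)=\mathbb{E}_{\theta_0}^{(n)}\big(\phi_n^1\,\mathbf 1_{\{Z_n(\theta_0)\in A\}}\big),\qquad A\ \text{Borel}.
\]
Because $0\le\nu_n\le\mathrm{Law}_{\theta_0}(Z_n(\theta_0))$ and the latter converges, by $(\ref{score_LAN})$, to $\Psi:=\mathcal{N}(0,J(\theta_0))$, the family $(\nu_n)$ is tight; by Prokhorov's theorem I extract from $n'$ a subsequence $n''$ along which $\nu_{n''}\Rightarrow\nu$ for some finite measure $\nu\le\Psi$ (the inequality surviving the limit by the portmanteau theorem). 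The domination $\nu\le\Psi$ yields a density $\phi:=\mathrm d\nu/\mathrm d\Psi$ taking values in $[0,1]$ $\Psi$-almost everywhere, and this $\phi$ is the announced limit test. Crucially it does not depend on $u$, so the same subsequence serves every $u\in K$.

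It remains to pass to the limit in the displayed expectation along $n''$. Writing $g_u(z)=\exp(\langle u,z\rangle-\tfrac12\langle u,J(\theta_0)u\rangle)$ and using $R_{n}(\theta_0,u)\to0$ in probability (condition $(\ref{reste_LAN})$), the quantity to control is $\int g_u\,\mathrm d\nu_{n''}$. Since $g_u$ is continuous but \emph{unbounded}, weak convergence alone is insufficient: this is the main obstacle. The remedy is uniform integrability of $L_{n}(u)$ under $\mathbb{P}_{\theta_0}^{(n)}$, which is precisely the contiguity that LAN provides. Indeed the limit $\exp(\langle u,Z\rangle-\tfrac12\langle u,J(\theta_0)u\rangle)$ with $Z\sim\Psi$ has mean one, so by Le Cam's first lemma $\mathbb{P}_{\theta_0+\phi_n(\theta_0)u}^{(n)}\triangleleft\triangleright\mathbb{P}_{\theta_0}^{(n)}$ and $(L_n(u))$ is uniformly integrable under the null; hence $g_u$ is uniformly integrable for $(\nu_{n''})$. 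Combining this with $\nu_{n''}\Rightarrow\nu=\phi\,\Psi$ through a truncation (Vitali / extended continuous-mapping) argument gives
\[
\lim_{n''\to\infty}\mathbb{E}_{\theta_0+\phi_{n''}(\theta_0)u}^{(n'')}(\phi_{n''}^1)=\int_{\mathbb{R}^d}\phi(z)\,g_u(z)\,\mathrm d\Psi(z).
\]

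Finally I would identify this integral with the stated one. Denoting by $\varphi_{0,J(\theta_0)}$ the density of $\Psi$ and completing the square,
\[
g_u(z)\,\varphi_{0,J(\theta_0)}(z)=\varphi_{0,J(\theta_0)}\big(z-J(\theta_0)u\big),
\]
so that multiplying $\Psi$ by $g_u$ merely shifts its mean from $0$ to $J(\theta_0)u$. Writing $\Phi_d$ for this limiting score density $\varphi_{0,J(\theta_0)}$, the limit becomes $\int_{\mathbb{R}^d}\phi(x)\,\Phi_d\!\big(x-J(\theta_0)u\big)\,\mathrm dx$, which is exactly $(\ref{lim2_test_P3})$. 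I expect the only genuinely delicate point to be the uniform-integrability step that lets the unbounded weight $g_u$ pass through the weak limit; the extraction of $\phi$ and the Gaussian computation are routine.
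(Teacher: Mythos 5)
A preliminary remark on the comparison itself: the paper contains no proof of this lemma. Remark \ref{Rem4} imports it from \cite{CHA96}, restated without nuisance parameters, so your attempt can only be measured against the standard argument behind that citation --- which is essentially the route you follow: encode the pair $(\phi_n^1,Z_n(\theta_0))$ in the sub-probability measures $\nu_n$, extract $\nu_{n''}\Rightarrow\nu\leqslant\Psi$ by Prokhorov (the domination $\nu\leqslant\Psi$ needs the complementary measures $\mathbb{P}_{\theta_0}^{(n)}\circ Z_n^{-1}-\nu_n$ to be passed to the limit as well, but that is routine), set $\phi=\mathrm{d}\nu/\mathrm{d}\Psi\in[0,1]$, observe that the extraction does not depend on $u$, and transfer to the local alternatives through $L_n(u)$. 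Your closing Gaussian identity is correct, but note what it actually yields: the limit is $\int\phi(x)\,\varphi_{0,J(\theta_0)}\big(x-J(\theta_0)u\big)\,\mathrm{d}x$, where $\varphi_{0,J(\theta_0)}$ is the $\mathcal{N}(0,J(\theta_0))$ density. Hence \eqref{lim2_test_P3} is true only if $\Phi_d$ there is read as that density; with $\Phi_d$ literally as in Theorem \ref{theo_inf_P3} (standard Gaussian) the shift would have to be $J(\theta_0)^{1/2}u$ instead. You handle this by silently redefining $\Phi_d$, which is the reading consistent with \cite{CHA96}; the imprecision lies in the statement, not in your computation, but it is worth saying explicitly.

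The genuine gap is the uniform-integrability transfer. You correctly get that $(L_n(u))$ is asymptotically uniformly integrable under $\mathbb{P}_{\theta_0}^{(n)}$ (from $\mathbb{E}_{\theta_0}^{(n)}(L_n(u))=1$, $L_n(u)\Rightarrow g_u(Z)$ and $\mathbb{E}(g_u(Z))=1$), but the inference ``hence $g_u$ is uniformly integrable for $(\nu_{n''})$'' is not valid: $g_u(Z_n(\theta_0))=L_n(u)e^{-R_n(\theta_0,u)}$ and $R_n$ is only $o_{\mathbb{P}}(1)$, not bounded, so nothing controls $g_u(Z_n)$ on the vanishing-probability event where $R_n$ is large. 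Concretely, take $\phi_n^1\equiv 1$ and replace an honest score by $\widetilde{Z}_n=Z_n+n\mathbbm{1}_{A_n}w$ with $\langle u,w\rangle=1$, $\mathbb{P}_{\theta_0}^{(n)}(A_n)=1/n$, compensating inside $\widetilde{R}_n=R_n-n\mathbbm{1}_{A_n}$: the LAN decomposition \eqref{LAN_repre_P3}, \eqref{score_LAN}, \eqref{reste_LAN} still holds and both sides of \eqref{lim2_test_P3} equal $1$, yet $\int g_u\,\mathrm{d}\nu_n=\mathbb{E}_{\theta_0}^{(n)}(g_u(\widetilde{Z}_n))\to\infty$. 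So your intermediate object can diverge even though the lemma is true, and the same objection hits the earlier reduction of $\mathbb{E}_{\theta_0}^{(n)}(\phi_n^1L_n(u))$ to $\int g_u\,\mathrm{d}\nu_{n''}$ ``using $R_n\to 0$''. The repair is to truncate \emph{before} comparing, which is presumably what your parenthetical Vitali argument intends, but the order matters: for fixed $M$, $\big\vert (g_u(Z_n)e^{R_n})\wedge M-g_u(Z_n)\wedge M\big\vert$ is bounded by $2M$ and tends to $0$ in probability, hence in $L^1$, so $\mathbb{E}_{\theta_0}^{(n)}\big(\phi_n^1(L_n(u)\wedge M)\big)-\int (g_u\wedge M)\,\mathrm{d}\nu_n\to 0$; the truncated integrals converge along $n''$ by weak convergence since $g_u\wedge M$ is bounded continuous; the untruncated tail is controlled \emph{only} through $\mathbb{E}_{\theta_0}^{(n)}\big((L_n(u)-M)^+\big)\to 1-\mathbb{E}(g_u(Z)\wedge M)$, small for large $M$ by the uniform integrability you do have; finally $\int (g_u\wedge M)\,\mathrm{d}\nu\uparrow\int\phi\, g_u\,\mathrm{d}\Psi$ by monotone convergence. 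With the argument reorganized in this order, your proof is complete.
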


\noindent We will now introduce an invariance principle by rotation who is involved in the next Definition.

\begin{defi}\label{AUMPI_test_P3}
A test $\phi_n^1$ is AUMPI($\alpha$) is the condition of the Definition \ref{def_AUMP_P3} are satisfied and for all subsequence $n'$ the corresponding test $\phi$ (obtained via Lemma \ref{lim_test_P3}) satisfied $\phi(Ru)=\phi(u)$ for any rotation from $\mathbb{R}^d$ to $\mathbb{R}^d$.
\end{defi}

To finish this section we give an AUMPI test to test the significance of the autoregressive parameter.

\begin{theo}\label{Theo_testAUMPI_AR(p)_P3}
The test
\begin{equation}\label{test_ratio_P3}
\widetilde{\phi}_n=\mathbbm{1}_{\left\lbrace 2\log\left(\frac{\mathcal{L}\left(\widehat{\theta}_n,X^{(n)}\right)}{\mathcal{L}\left(\theta,X^{(n)}\right)}\right)\geqslant C_{\alpha} \right\rbrace}
\end{equation}
is AUMPI($\alpha$) to test $\theta = \theta_0$ against $\theta \neq \theta_0$ where $C_{\alpha}$ is the $\alpha$-quantile of $\chi_p^2.$
\end{theo}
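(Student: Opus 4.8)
The plan is to exploit the LAN structure of Theorem~\ref{Theo_LAN_AR(p)_P3} to reduce, locally around $\theta_0$, the problem of testing $\theta=\theta_0$ against $\theta\neq\theta_0$ to that of testing the shift $u=0$ against $u\neq 0$ in a Gaussian shift experiment, in which the likelihood ratio test becomes the test based on the Euclidean norm of the (standardized) score. I would proceed in three steps: (i) determine the limiting law of the statistic $\Lambda_n=2\log\!\big(\mathcal{L}(\widehat{\theta}_n,X^{(n)})/\mathcal{L}(\theta_0,X^{(n)})\big)$ under the null and under local alternatives; (ii) identify, through Lemma~\ref{lim_test_P3}, the limit test $\phi$ attached to $\widetilde{\phi}_n$ and check its rotation invariance; and (iii) show that this limit test is uniformly most powerful among invariant tests of the limiting Gaussian shift, which yields both the level condition~(\ref{ineg_AUMP1_P3}) and the power domination~(\ref{ineg_AUMP2_P3}).

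For step (i), I would insert the random local parameter $u_n=\sqrt{n}(\widehat{\theta}_n-\theta_0)$ into the LAN expansion~(\ref{LAN_AR(p)_P3}). The rate $\|\widehat{\theta}_n-\theta_0\|^2=O(\log\log n/n)$ of Proposition~\ref{Prop_LLI} makes $u_n$ bounded in probability, while the asymptotic linearity of the MLE underlying Proposition~\ref{eff_MLE_P3} gives $u_n=I(\theta_0)^{-1}(M_n/\sqrt{n})+o_{\mathbb{P}_{\theta_0}^{(n)}}(1)$. Substituting this into~(\ref{LAN_AR(p)_P3}) and using $R_n(\theta_0,u_n)\to 0$, the linear and quadratic terms combine into
\begin{equation*}
\Lambda_n=\left(\frac{M_n}{\sqrt{n}}\right)^{\!*}I(\theta_0)^{-1}\left(\frac{M_n}{\sqrt{n}}\right)+o_{\mathbb{P}_{\theta_0}^{(n)}}(1).
\end{equation*}
Since $M_n/\sqrt{n}\xrightarrow{\mathcal{L}}\mathcal{N}(0,I(\theta_0))$ by~(\ref{score_LAN}), setting $\Delta_n=I(\theta_0)^{-1/2}(M_n/\sqrt{n})$ gives $\Lambda_n=\|\Delta_n\|^2+o_{\mathbb{P}_{\theta_0}^{(n)}}(1)\xrightarrow{\mathcal{L}}\chi_p^2$ under $\mathbb{P}_{\theta_0}^{(n)}$, which identifies $C_\alpha$ as the threshold giving asymptotic level $\alpha$ and establishes~(\ref{ineg_AUMP1_P3}). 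By Le Cam's third lemma, under $\mathbb{P}_{\theta_0+\phi_n(\theta_0)u}^{(n)}$ one has $M_n/\sqrt{n}\xrightarrow{\mathcal{L}}\mathcal{N}(I(\theta_0)u,I(\theta_0))$, hence $\Delta_n\xrightarrow{\mathcal{L}}\mathcal{N}(I(\theta_0)^{1/2}u,Id_p)$ and $\Lambda_n$ converges to a noncentral $\chi_p^2$ with noncentrality $u^*I(\theta_0)u$.

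For steps (ii) and (iii), the above shows that the limit test attached to $\widetilde{\phi}_n$ through Lemma~\ref{lim_test_P3} is, in the standardized coordinates carried by $\Delta_n$, the test $\phi(x)=\mathbbm{1}_{\{\|x\|^2\geq C_\alpha\}}$; this depends on $x$ only through $\|x\|$ and is therefore invariant under every rotation, as required by Definition~\ref{AUMPI_test_P3}. It remains to see that this norm test is optimal in the invariant class. In the Gaussian shift $\mathcal{N}(v,Id_p)$ with $v=I(\theta_0)^{1/2}u$, the orthogonal group acts transitively on each sphere $\{\|v\|=r\}$, a maximal invariant is $\|x\|^2$ whose law is the noncentral $\chi_p^2(\|v\|^2)$, and this family has monotone likelihood ratio in $\|v\|^2$. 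The Neyman--Pearson lemma applied to the maximal invariant then shows that rejecting for large $\|x\|^2$ is uniformly most powerful among rotation-invariant level-$\alpha$ tests. Transporting this back through Lemma~\ref{lim_test_P3}, the limit power of $\widetilde{\phi}_n$ dominates that of any competing invariant sequence uniformly in $u$, which is precisely~(\ref{ineg_AUMP2_P3}).

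The main obstacle I anticipate is the rigor of step (i): the expansion~(\ref{LAN_AR(p)_P3}) is stated for fixed $u$ in a ball, whereas I must evaluate it at the random, data-dependent argument $u_n$. Making this legitimate requires a control of the remainder $R_n(\theta_0,\cdot)$ that is uniform on compacts together with the asymptotic linearity of the MLE; verifying $R_n(\theta_0,u_n)\to 0$ for the stochastically bounded sequence $u_n$ is the delicate point, and it is exactly here that the almost sure rate of Proposition~\ref{Prop_LLI} and the efficiency of Proposition~\ref{eff_MLE_P3} do the essential work.
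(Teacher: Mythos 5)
Your proposal is correct in its essentials, but it takes a genuinely different route from the paper's. The paper does not re-derive any optimality theory: it invokes the characterization from \cite{CHA96}, recorded as Theorem \ref{Theo_AUMPI_test_P3} --- the score test $\phi_n=\mathbbm{1}_{\{\langle I(\theta_0)^{-1}M_n/\sqrt{n},\,M_n/\sqrt{n}\rangle \geqslant C_{\alpha}\}}$ is AUMPI($\alpha$), \emph{and so is any test asymptotically equivalent to it} --- and then reduces the entire proof to showing $\mathbb{E}_{\theta_0}^{(n)}\vert \widetilde{\phi}_n - \phi_n\vert \to 0$. That equivalence rests on Lemma \ref{Tech4.1}: since $\widehat{\theta}_n-\theta_0=\langle M\rangle_n^{-1}M_n$ and the log-likelihood ratio is \emph{exactly} quadratic in the parameter, one has the identity $2\log\bigl(\mathcal{L}(\widehat{\theta}_n,X^{(n)})/\mathcal{L}(\theta_0,X^{(n)})\bigr)=M_n^*\langle M\rangle_n^{-1}M_n$, so the LR and score statistics differ by $o_{\mathbb{P}_{\theta_0}^{(n)}}(1)$, and an $\varepsilon$-argument around the threshold $C_{\alpha}$ (using that $\chi_p^2$ puts no mass at $C_{\alpha}$) finishes. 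Your plan instead re-proves the content of Theorem \ref{Theo_AUMPI_test_P3} from scratch: limit laws under the null and under local alternatives via Le Cam's third lemma, identification of the limit test through Lemma \ref{lim_test_P3}, and UMP-invariance in the Gaussian shift via maximal invariant, monotone likelihood ratio and Neyman--Pearson. This is a legitimate and more self-contained route --- it makes transparent \emph{why} rotation invariance is the relevant restriction --- at the cost of redoing \cite{CHA96}, of carrying out the subsequence extraction of Lemma \ref{lim_test_P3} carefully for an arbitrary competitor, and provided the domination (\ref{ineg_AUMP2_P3}) is read, as in \cite{CHA96}, against asymptotically invariant competitors only; against arbitrary level-$\alpha$ tests no such domination can hold when $p\geqslant 2$.

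Two local corrections to your step (i). First, the obstacle you single out as the delicate point --- evaluating the LAN expansion at the random argument $u_n=\sqrt{n}(\widehat{\theta}_n-\theta_0)$ --- is in fact not delicate in this model: the expansion (\ref{LAN_AR(p)_P3}) holds identically in $u$ with remainder $-\frac{1}{2}\bigl\langle u,(\langle M\rangle_n/n-I(\theta_0))u\bigr\rangle$ and no higher-order term, so plugging in $u_n$ needs only tightness of $u_n$ together with (\ref{proba_inf_Fisher_P1}); no uniform-on-compacts control of a nonquadratic remainder is required, and this exact-quadratic computation is precisely the paper's Lemma \ref{Tech4.1}. Second, tightness of $u_n$ does \emph{not} follow from Proposition \ref{Prop_LLI}: the law of the iterated logarithm gives only $\Vert u_n\Vert = O(\sqrt{\log\log n})$ a.s., which is unbounded. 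The correct source is the asymptotic normality (\ref{loi_est1_P1}), or equivalently (\ref{proba_inf_Fisher_P1}) combined with (\ref{loi_martingale_P1}).
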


\begin{proof}
See Section \ref{Preuve_6}
\end{proof}

\section{Conclusion.}

We have seen through this study that the classical properties on the stable autoregressive processes concerning the MLE are preserved despite the harmful effects of the filter which leads to the lack of ergodicity.
On the other hand, the results obtained in \cite{BCK14} are sufficient to deduce the LAN property which leads to the asymptotic efficiency for the estimation of the autoregressive parameter and to the construction of an asymptotically optimal procedure to test the significance of the same parameter.
It would be interesting in the future to extend this study without any assumption on convergence rate of the PACF of the nuisance process. It would be just as interesting to build a procedure to detect a change in the autoregressive dynamic. They would be needed for that convergence rate in (\ref{controle_zeta1_gamma}) and (\ref{beta_zeta_zero}) in order to apply the method proposed in \cite{davis1995}.

\section*{Acknowledgments.}

I would like to thank Alexandre Brouste and Fr\'{e}d\'{e}ric Pro\"{i}a for proposing this very interesting subject and for advising me during this study.

\section{Auxiliary results.}\label{Sec4}

\

This section is devoted to the numerous technicals lemmas and proposition that we will use for the proof of the results of section \ref{Sec2}. Before starting the proofs, we give technical results established in \cite{BCK14}. We can write

\begin{equation}\label{decompo_MLE_P1}
\widehat{\theta}_n-\theta=(\langle M \rangle_n)^{-1}M_n,
\end{equation}

\noindent where

\begin{align*}
\langle M \rangle_n = \sum_{i=1}^n \frac{a_{i-1}^*\zeta_{i-1}\zeta_{i-1}^*a_{i-1}}{\sigma_i^2} \ \text{and} \ M_n=\sum_{i=1}^n \frac{a_{i-1}^*\zeta_{i-1}\varepsilon_i}{\sigma_i}.
\end{align*}

Let $\mathcal{F}_n=\mathcal{F}(X_0,X_1,\dots,X_n)$ be  the $\sigma$-algebra generated by the values of the process up to time $n.$ It follows that $(M_n)$ is a $\mathcal{F}_n$-martingale, and that  $(\langle M \rangle_n)$ is its bracket process.

\

\noindent We also have the following properties :

\begin{equation}\label{beta_sommable_P1}
\sum_{i=1}^{\infty} \beta_i^2 < \infty,
\end{equation}

\begin{equation}\label{proba_inf_Fisher_P1}
{\frac{\langle M \rangle_n}{n}}\xrightarrow[n\to\infty]{\mathbb{P}_{\theta}^{(n)}}{I(\theta)},
\end{equation}
where $I(\theta)$ is defined in (\ref{Lyapu_P1}),

\begin{equation}\label{var_asympto_innov_P1}
{\sigma_n^2}\xrightarrow[n\to\infty]{}{\sigma_{\infty}^2 > 0},
\end{equation}

\begin{equation}\label{loi_martingale_P1}
{\frac{M_n}{\sqrt{n}}}\xrightarrow[n\to\infty]{\mathcal{L}}{\mathcal{N}(0,I(\theta))}
\end{equation}

\noindent under $\mathbb{P}_{\theta}^{(n)}.$

\

\begin{rem}\label{Rem1}
The process $(\zeta_n)$ obtained via (\ref{Process_Filtré_P1}) is a Markov process, but unfortunately inhomogeneous.
The first step in our proofs will be to show that the firt $p$ components of $(\zeta_n)$ have the same asymptotic behavior as an autoregressive process strictly stationary and ergodic.
\end{rem}

\begin{prop}\label{trajectoire_zeta_P2}
Let $(\zeta_n^{(1)})$ be the firt $p$ components of $(\zeta_n)$ defined in (\ref{nouveau_zeta_P1}), and $(\zeta_n^{(2)})$ be the last $p$ components. Consider the process
\begin{align*}
\gamma_n = A_0\gamma_{n-1} + \ell^1\sigma_{\infty}\varepsilon_n, \ \forall n \geqslant 1
\end{align*}
with $\gamma_0$ having the strictly stationnary and ergodic distribution associated with the autoregressive relation, and $\ell^1 = (1,0 \dots,0)^*$ a vector of lenght $p.$ Then,
\begin{equation}\label{controle_zeta1_gamma}
{\left\Vert \zeta_n^{(1)} - \gamma_n \right\Vert}\xrightarrow[n\to\infty]{a.s.}0.
\end{equation}
\end{prop}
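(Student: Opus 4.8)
The plan is to decompose the $2p$-dimensional recursion (\ref{nouveau_zeta_P1}) into its two blocks and to treat $(\zeta_n^{(1)})$ as a perturbed copy of the contracting recursion defining $(\gamma_n)$. Reading off the top and bottom blocks of $\widetilde{A}_{n-1}$ and recalling that the noise $\ell\sigma_n\varepsilon_n$ loads only the first coordinate, one obtains
\begin{equation*}
\zeta_n^{(1)} = A_0\zeta_{n-1}^{(1)} + \beta_{n-1}A_0\zeta_{n-1}^{(2)} + \ell^1\sigma_n\varepsilon_n, \qquad \zeta_n^{(2)} = \zeta_{n-1}^{(2)} + \beta_{n-1}\zeta_{n-1}^{(1)}.
\end{equation*}
Setting $D_n = \zeta_n^{(1)} - \gamma_n$ and subtracting $\gamma_n = A_0\gamma_{n-1} + \ell^1\sigma_\infty\varepsilon_n$ (driven by the same innovations) gives the affine recursion $D_n = A_0 D_{n-1} + F_n$ with forcing term $F_n = \beta_{n-1}A_0\zeta_{n-1}^{(2)} + \ell^1(\sigma_n-\sigma_\infty)\varepsilon_n$. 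Since $\rho(A_0)<1$ by $(H_1)$, I fix $\varrho\in(\rho(A_0),1)$ and $C>0$ with $\Vert A_0^m\Vert\leqslant C\varrho^m$, and unroll using $\zeta_0=0$:
\begin{equation*}
D_n = -A_0^n\gamma_0 + \sum_{k=1}^n A_0^{n-k}F_k.
\end{equation*}
The term $-A_0^n\gamma_0$ vanishes a.s. because $\Vert A_0^n\Vert\to 0$ and $\gamma_0$ is a.s. finite, so everything reduces to proving $F_k\to 0$ a.s. and that its geometric convolution vanishes.

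For the second half of $F_k$ I would first quantify the speed in (\ref{var_asympto_innov_P1}): from $\sigma_n^2=\prod_{i=1}^{n-1}(1-\beta_i^2)$ in (\ref{Durbin_var_P1}) and $\beta_i^2=O(i^{-\alpha})$ with $\alpha>1$ from $(H_3)$, one gets $\sigma_n^2-\sigma_\infty^2 = O\bigl(\sum_{i\geqslant n}\beta_i^2\bigr) = O(n^{-(\alpha-1)})$, hence $\sigma_n-\sigma_\infty = O(n^{-(\alpha-1)})$ since $\sigma_n+\sigma_\infty\geqslant\sigma_\infty>0$. Combined with the law of the iterated logarithm for the i.i.d. standard Gaussians $(\varepsilon_n)$, which yields $\vert\varepsilon_n\vert = O(\sqrt{\log n})$ a.s., this forces $(\sigma_n-\sigma_\infty)\varepsilon_n\to 0$ a.s.

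The delicate part, and where I expect the main obstacle, is the coupling term $\beta_{n-1}A_0\zeta_{n-1}^{(2)}$, because the second block of $\widetilde{A}_n$ is $Id_p$, so $(\zeta_n^{(2)})$ is an integrator and grows. The saving fact is that it only accumulates $\beta_j$-weighted copies of $\zeta_j^{(1)}$, namely $\zeta_n^{(2)}=\sum_{j=1}^{n-1}\beta_j\zeta_j^{(1)}$, so the smallness of the PACF under $(H_3)$ should defeat the growth. I would establish by a bootstrap the a priori bound $\Vert\zeta_n^{(1)}\Vert = O(\sqrt{\log n})$ a.s.: feeding a trial bound $\Vert\zeta_j^{(1)}\Vert\leqslant C'\sqrt{\log j}$ into the unrolled first-block recursion and using $\vert\beta_j\vert=O(j^{-\alpha/2})$ together with $\sum_{j\leqslant k}\vert\beta_j\vert\sqrt{\log j}=O\bigl(k^{(1-\alpha/2)_+}\sqrt{\log k}\bigr)$ gives $\Vert\zeta_{k}^{(2)}\Vert = O\bigl(k^{(1-\alpha/2)_+}\sqrt{\log k}\bigr)$; the coupling contribution $C\sum_{k=1}^n\varrho^{n-k}\vert\beta_{k-1}\vert\Vert\zeta_{k-1}^{(2)}\Vert$ then convolves a sequence that is $o(\sqrt{\log n})$ against a summable geometric kernel and is therefore negligible, while the genuine noise contribution satisfies $\sum_{k=1}^n\varrho^{n-k}\sigma_k\vert\varepsilon_k\vert=O(\sqrt{\log n})$, reproducing the trial bound (a Gronwall-type induction over $n$ large enough closes it). With this bound, $\Vert\zeta_n^{(2)}\Vert=O\bigl(n^{(1-\alpha/2)_+}\sqrt{\log n}\bigr)$, so $\vert\beta_{n-1}\vert\,\Vert\zeta_{n-1}^{(2)}\Vert = O\bigl(n^{-\alpha/2+(1-\alpha/2)_+}\sqrt{\log n}\bigr)$, whose exponent is negative for every $\alpha>1$; hence this term tends to $0$ a.s.

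Finally I would invoke the elementary Toeplitz lemma: if $a_k\to 0$ and $\varrho\in(0,1)$, then $\sum_{k=1}^n\varrho^{n-k}a_k\to 0$ (split the sum at a large fixed index). Applied to $a_k=\Vert F_k\Vert$, which tends to $0$ by the two previous paragraphs, it gives $\bigl\Vert\sum_{k=1}^n A_0^{n-k}F_k\bigr\Vert\leqslant C\sum_{k=1}^n\varrho^{n-k}\Vert F_k\Vert\to 0$, and therefore $\Vert\zeta_n^{(1)}-\gamma_n\Vert\to 0$ almost surely, which is exactly (\ref{controle_zeta1_gamma}).
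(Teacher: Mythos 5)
Your proof is correct and follows the paper's decomposition of the error recursion --- you derive exactly the same relation $\zeta_n^{(1)}-\gamma_n = A_0(\zeta_{n-1}^{(1)}-\gamma_{n-1}) + \beta_{n-1}A_0\zeta_{n-1}^{(2)} + \ell^1(\sigma_n-\sigma_\infty)\varepsilon_n$ as the paper and conclude with the same contraction/perturbation step using $\rho(A_0)<1$ --- but you handle the key coupling term $\beta_{n-1}A_0\zeta_{n-1}^{(2)}$ by a genuinely different route. The paper proves $\Vert\beta_n\zeta_n\Vert\to 0$ a.s.\ (Lemma \ref{lem_Tech3_P4}) through second moments: it bounds $\Vert\mathbb{E}(\beta_n^2\zeta_n\zeta_n^*)\Vert\leqslant Kn\beta_n^2=O(n^{-\delta})$ using a uniform bound on the products $\widetilde{A}_{n-1}\cdots\widetilde{A}_{n-k}$ (Lemma \ref{Lem_Tech2_P4}, which requires a spectral analysis of the $2\times 2$ comparison matrices $H_i$), and then invokes a Gaussian Borel--Cantelli lemma (Lemma \ref{Lem_Tech1_P4}). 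You instead run a pathwise bootstrap: from $\zeta_n^{(2)}=\sum_{j<n}\beta_j\zeta_j^{(1)}$ and the a.s.\ bound $\vert\varepsilon_n\vert=O(\sqrt{\log n})$ you close a Gronwall-type induction yielding $\Vert\zeta_n^{(1)}\Vert=O(\sqrt{\log n})$ a.s., hence $\vert\beta_n\vert\,\Vert\zeta_n^{(2)}\Vert\to 0$ a.s.\ directly. Your route avoids the paper's two auxiliary lemmas and the matrix-product estimate entirely, at the cost of a somewhat delicate self-referential induction (which does close: working with $u_n=\max_{j\leqslant n}\Vert\zeta_j^{(1)}\Vert/\sqrt{\log(j+2)}$ one gets $u_n\leqslant a_0u_{n-1}+\delta_n u_{n-1}+G$ with $a_0<1$, $\delta_n\to 0$, so $(u_n)$ is bounded); the paper's moment approach is blunter but its Lemma \ref{lem_Tech3_P4} controls \emph{both} blocks of $\zeta_n$ at once and is reused later in the proofs of the main theorems, which your block-specific bound would not immediately provide. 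You also make quantitative a step the paper leaves terse: since $\limsup\vert\varepsilon_n\vert/\sqrt{2\log n}=1$ a.s., the convergence $(\sigma_n-\sigma_\infty)\varepsilon_n\to 0$ genuinely needs a rate on $\sigma_n-\sigma_\infty$, which you correctly extract from (\ref{Durbin_var_P1}) and $(H_3)$ as $O(n^{1-\alpha})$. Two cosmetic repairs: the bound $\vert\varepsilon_n\vert=O(\sqrt{\log n})$ a.s.\ comes from Gaussian tails and Borel--Cantelli, not from the law of the iterated logarithm (which concerns partial sums); and your estimate $\sum_{j\leqslant k}\vert\beta_j\vert\sqrt{\log j}=O(k^{(1-\alpha/2)_+}\sqrt{\log k})$ fails by logarithmic factors in the boundary case $\alpha=2$ (the sum is then $O(\log^{3/2}k)$) --- harmless, since $\vert\beta_{n}\vert\,\Vert\zeta_n^{(2)}\Vert$ still vanishes, or simply replace $\alpha$ by a slightly smaller exponent in $(1,2)$, which $(H_3)$ permits.
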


\begin{proof}
See Section \ref{Preuve_1}.
\end{proof}

\begin{rem}\label{Rem2}
Since $\rho(A_0)<1$ the process $(\gamma_n)$ admits a unique representation with the following properties : causality, stationarity and ergodicity (see \cite{BD1991} for more details). \\
This property about $(\gamma_n)$ will allow us later via ergodicity arguments to obtain the a.s convergences. Since $(\gamma_n)$ is a Gaussian ergodic process, $\mathbb{E}(\Vert \gamma_n \Vert^p) = K_p < \infty$ for all $p.$ 
\end{rem}

\begin{lem}\label{Lem_Tech1_P4}
Consider a random vector $T_n \in \mathbb{R}^d$ such that, for all $n \geqslant 1,$ 
\begin{align*}
T_n \sim \mathcal{N}(0,A_n)
\end{align*}
where the covariance matrix satisfies $\Vert A_n \Vert = O(n^{-\delta})$ for some $\delta >0.$ Then,
\begin{equation}\label{EQ_4.1}
{\left\Vert T_n \right\Vert}\xrightarrow[n\to\infty]{a.s.}0.
\end{equation}
\end{lem}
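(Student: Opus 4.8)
The plan is to exploit the fact that, although nothing is assumed about the joint law of the $(T_n)$, the marginal Gaussian structure forces very thin tails on $\|T_n\|$, thin enough that the convergent half of the Borel--Cantelli lemma applies even when $\delta$ is small. The only care needed is to pass to a sufficiently high moment, since a first- or second-moment bound only yields the tail $\sum_n n^{-\delta}$, which diverges when $\delta \le 1$.

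First I would diagonalise the covariance. Since $A_n$ is symmetric positive semidefinite, its operator norm $\|A_n\|$ equals its largest eigenvalue, and $T_n$ has the same law as $A_n^{1/2}G$ with $G \sim \mathcal{N}(0,Id_d)$. Writing $\lambda_1^{(n)} \le \cdots \le \lambda_d^{(n)} \le \|A_n\|$ for the eigenvalues of $A_n$, this gives the distributional bound $\|T_n\|^2 \stackrel{d}{=} \sum_{j=1}^d \lambda_j^{(n)} Z_j^2 \le \|A_n\| \sum_{j=1}^d Z_j^2$, where the $Z_j$ are i.i.d.\ standard Gaussian. Hence, for every integer $m \ge 1$, $\mathbb{E}\big[\|T_n\|^{2m}\big] \le \|A_n\|^m\, \mathbb{E}\big[(\chi_d^2)^m\big] = c_{m,d}\,\|A_n\|^m$, where $c_{m,d} = \mathbb{E}[(\chi_d^2)^m] < \infty$ depends only on $m$ and $d$.

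Next I would combine this moment estimate with the decay hypothesis via Markov's inequality. Fix $\varepsilon > 0$. Using $\|A_n\| \le C n^{-\delta}$ we obtain $\mathbb{P}(\|T_n\| > \varepsilon) \le \varepsilon^{-2m}\,\mathbb{E}[\|T_n\|^{2m}] \le c_{m,d}\,C^m\,\varepsilon^{-2m}\, n^{-\delta m}$. The decisive point, and the only genuine obstacle, is the choice of $m$: since $\delta > 0$ we may fix an integer $m$ with $m\delta > 1$, and then $\sum_n \mathbb{P}(\|T_n\| > \varepsilon) < \infty$. I emphasise that no independence across $n$ is required here, only summability, so the convergent half of the Borel--Cantelli lemma gives $\mathbb{P}(\|T_n\| > \varepsilon \text{ i.o.}) = 0$.

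Finally, applying this for $\varepsilon = 1/k$, $k \in \mathbb{N}^*$, and taking the countable union of the resulting null events, I conclude that almost surely $\|T_n\| \le 1/k$ for all large $n$ and every $k$, which is precisely $\|T_n\| \to 0$ a.s., establishing (\ref{EQ_4.1}).
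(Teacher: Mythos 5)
Your proof is correct and takes essentially the same route as the paper: both arguments reduce $\mathbb{P}(\Vert T_n \Vert > \varepsilon)$ to the tail of a $\chi_d^2$ variable at level $\varepsilon^2 \Vert A_n \Vert^{-1}$ (of order $n^{\delta}$), exploit that this tail decays faster than any polynomial so that the probabilities become summable, and conclude with the convergent half of Borel--Cantelli. The only difference is cosmetic: you quantify the superpolynomial tail decay via Markov's inequality with a high moment ($m\delta > 1$), whereas the paper bounds the tail integrand $x^{d/2-1}e^{-x/2}$ by $x^{-\beta}$ with $\beta > \frac{1+\delta}{\delta}$ and integrates directly.
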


\begin{proof}
Following the idea of Lemma A.1 in \cite{CGL13}, let $\varepsilon > 0$ such that
\begin{align*}
\mathbb{P}(\Vert T_n \Vert > \varepsilon) & =  \mathbb{P}(\Vert T_n \Vert^2 > \varepsilon^2) \\
& =  \mathbb{P}(\langle A_n \mu_n, \mu_n \rangle > \varepsilon^2) \leqslant \mathbb{P}\left(\Vert \mu_n \Vert^2 > \frac{\varepsilon^2}{\Vert A_n \Vert}\right),
\end{align*}
where $\Vert \mu_n \Vert^2 \sim \chi^2(d)$ which, in turn, implies
\begin{align*}
\mathbb{P}(\Vert \mu_n \Vert^2 > \varepsilon^2\Vert A_n \Vert^{-1}) = \int_{\varepsilon^2\Vert A_n \Vert^{-1}}^{\infty} c(d)x^{\frac{d}{2}-1}\exp\left(-\frac{x}{2}\right) \, \mathrm{d}x \leqslant \int_{\varepsilon^2\Vert A_n \Vert^{-1}}^{\infty} c(d)x^{-\beta} \, \mathrm{d}x 
\end{align*}
for any $\beta > 0$ and $n$ big enough, where $c(d)$ is some positive constant independent of $x$ and $n$. Making use of the hypothesis on $\Vert A_n \Vert$, we get
\begin{align*}
\int_{\varepsilon^2\Vert A_n \Vert^{-1}}^{\infty} c(d)x^{-\beta} \, \mathrm{d}x = \frac{c(d)\varepsilon^{2(1-\beta)}\Vert A_n\Vert^{\beta-1}}{\beta-1}=O(n^{\delta(1-\beta)})
\end{align*}
as soon as $\beta >1.$ Let  us choose $\beta$ such that $\delta(1-\beta) < -1$, i.e. $\beta > \frac{1+\delta}{\delta}.$
It remains to apply Borel-Cantelli's lemma to reach the desired result.
\end{proof}

\noindent The following lemma will allow us to control the norm of the matrix $\widetilde{A}_n.$ 

\begin{lem}\label{Lem_Tech2_P4}
Let $\widetilde{A}_n$ be the transition matrix as it is defined in (\ref{nouveau_zeta_P1}), namely

$$\forall n \geqslant 1, \  \widetilde{A}_n = \begin{pmatrix}
A_0 & \beta_n A_0 \\
\beta_n Id_p & Id_p
\end{pmatrix}  \ \text{with} \ \widetilde{A}_0 = \begin{pmatrix}
A_0 & A_0 \\
Id_p & Id_p
\end{pmatrix}.$$

\

Then,

\begin{equation}\label{Tech2.1_P4}
\sup\limits_{n \in \mathbb{N}} \left\Vert \prod_{i=1}^n \widetilde{A}_{n-i} \right\Vert  < \infty.
\end{equation}
\end{lem}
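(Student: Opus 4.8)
The plan is to view the product $P_n := \prod_{i=1}^n \widetilde{A}_{n-i} = \widetilde{A}_{n-1}\cdots\widetilde{A}_0$ as the propagator of the homogeneous recursion $\zeta_n = \widetilde{A}_{n-1}\zeta_{n-1}$, and to bound $\Vert P_n\Vert$ by controlling $\Vert \zeta_n\Vert$ uniformly over all initial data $\zeta_0$ in the unit ball. Writing $\zeta_n = (u_n^*, v_n^*)^*$ in two blocks of size $p$, the block form of $\widetilde{A}_{n-1}$ yields the coupled system $u_n = A_0 u_{n-1} + \beta_{n-1} A_0 v_{n-1}$ and $v_n = v_{n-1} + \beta_{n-1} u_{n-1}$. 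The difficulty is structural: the limiting transition matrix $\begin{pmatrix} A_0 & 0 \\ 0 & Id_p \end{pmatrix}$ has spectral radius $1$ because of the identity block, so no global contraction is available, and the second block is an accumulating sum $v_n = v_0 + \sum_{j<n}\beta_j u_j$.

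First I would fix a vector norm on $\dR^p$ adapted to $A_0$, i.e.\ one for which $\Vert A_0\Vert =: r < 1$ (possible since $(H_1)$ gives $\rho(A_0)<1$); all norms being equivalent, a bound in this norm yields the claim for the Euclidean-induced matrix norm. In this norm the recursions produce the scalar inequalities $a_n \le r(a_{n-1} + |\beta_{n-1}| b_{n-1})$ and $b_n \le b_{n-1} + |\beta_{n-1}| a_{n-1}$, where $a_n = \Vert u_n\Vert$ and $b_n = \Vert v_n\Vert$. I would then unfold the first inequality as $a_n \le r^n a_0 + \sum_{j=0}^{n-1} r^{n-j} |\beta_j| b_j$ and substitute it into the telescoped second one $b_n \le b_0 + \sum_{j=0}^{n-1} |\beta_j| a_j$. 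After exchanging the order of summation this gives a bound of the form $b_n \le C_0 + \sum_{i=0}^{n-1} q_i b_i$ with weights $q_i = |\beta_i| \sum_{j>i} |\beta_j| r^{j-i}$ and $C_0$ depending only on $a_0, b_0$.

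The crux of the argument is the summability of $(q_i)$, and this is exactly where $(H_3)$ enters. Using $|\beta_j| \le C j^{-\alpha/2}$ together with the geometric factor, one estimates $\sum_{j>i}|\beta_j| r^{j-i} = O(i^{-\alpha/2})$, whence $q_i = O(i^{-\alpha})$ and $\sum_i q_i < \infty$ precisely because $\alpha>1$. The point is that each coupling between the two blocks carries one factor $\beta$, so the feedback weight contains the product $\beta_i\beta_j$ and is therefore square-summable in a way that converts the boundary eigenvalue $1$ into a harmless perturbation. I expect this quadratic smallness of the coupling — rather than any routine estimate — to be the main obstacle, since it is what forces the use of the strengthened hypothesis $\alpha>1$ instead of mere $(\beta_n^2)\in\ell^1$.

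Finally I would invoke the discrete Gronwall inequality to conclude $b_n \le C_0 \prod_{i=0}^{n-1}(1+q_i) \le C_0\exp(\sum_i q_i)$, a bound uniform in $n$. Reinjecting this into the expression for $a_n$ bounds $(a_n)$ as well, and since $C_0$ remains bounded as $\zeta_0$ ranges over the unit ball, we obtain $\sup_n\Vert \zeta_n\Vert < \infty$ uniformly over initial data, that is, $\sup_n\Vert P_n\Vert < \infty$, which is the desired conclusion.
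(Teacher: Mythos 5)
Your proof is correct, and it shares the paper's skeleton: you interpret the product as the propagator of $\zeta_n=\widetilde{A}_{n-1}\zeta_{n-1}$, pass to a norm in which $\Vert A_0\Vert_*=r<1$ (as the paper does via Proposition 2.3.15 of \cite{Duf97}), and reduce to the same two-dimensional scalar comparison system $a_n\leqslant r(a_{n-1}+\vert\beta_{n-1}\vert b_{n-1})$, $b_n\leqslant b_{n-1}+\vert\beta_{n-1}\vert a_{n-1}$. Where you genuinely diverge is in how you close this system. The paper majorizes the block norms by the product $\prod_i \Vert H_i\Vert$ of the $2\times 2$ matrices $H_i=\left(\begin{smallmatrix} a_0 & \vert\beta_i\vert a_0\\ \vert\beta_i\vert & 1\end{smallmatrix}\right)$ and carries out a spectral computation: expanding the discriminant of the characteristic polynomial of $H_i^*H_i$ gives $\Vert H_i\Vert^2=1+O(\beta_i^2)$, so the infinite product converges by (\ref{beta_sommable_P1}). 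You instead unfold the contracting inequality, substitute into the telescoped neutral one to get a Volterra-type bound $b_n\leqslant C_0+\sum_{i<n}q_i b_i$ with $q_i=\vert\beta_i\vert\sum_{j>i}\vert\beta_j\vert r^{j-i}$, and conclude by discrete Gronwall. Your route is more elementary (no eigenvalue asymptotics) and isolates the key mechanism transparently: each coupling between the contracting block and the identity block costs one factor of $\beta$, so the feedback is quadratic in $\beta$ and summable, which neutralizes the boundary eigenvalue $1$. Both proofs exploit exactly this quadratic smallness; the paper encodes it spectrally, you encode it in the weights $q_i$.

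One correction to your commentary: the claim that the strengthened hypothesis $(H_3)$ (a pointwise rate with $\alpha>1$) is \emph{forced} here is inaccurate. The paper's proof of this lemma uses only $\sum_i\beta_i^2<\infty$, i.e.\ (\ref{beta_sommable_P1}), and your own weights are summable under that weaker hypothesis as well: writing $k=j-i$, one has $\sum_i q_i=\sum_{k\geqslant 1}r^k\sum_i\vert\beta_i\vert\vert\beta_{i+k}\vert\leqslant \frac{r}{1-r}\sum_i\beta_i^2$, using $\vert\beta_i\beta_{i+k}\vert\leqslant\frac{1}{2}(\beta_i^2+\beta_{i+k}^2)$. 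So your estimate $q_i=O(i^{-\alpha})$, while valid given $(H_3)$, is an unnecessary detour, and $(H_3)$ is needed elsewhere in the paper (in Lemma \ref{lem_Tech3_P4}, where $n\beta_n^2\rightarrow 0$ at a polynomial rate is required), not for the present lemma.
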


\begin{proof}
Let $T_0 = Id_{2p}$ and, for $n \geqslant 1$,
\begin{align*}
T_n = \prod_{i=1}^n \widetilde{A}_{n-i}.
\end{align*}
Working block by block, it is not hard to see that

\begin{align*}
\left\{
    \begin{array}{ll}
       T_{n+1}^{(11)} = A_0T_n^{(11)}+\beta_n A_0T_n^{(21)} \\
       T_{n+1}^{(12)} = A_0T_n^{(12)} + \beta_n A_0T_n^{(22)} \\
       T_{n+1}^{(21)} = \beta_n T_n^{(11)}+T_n^{(21)} \\
       T_{n+1}^{(22)} = \beta_n T_n^{(12)}+T_n^{(21)}
    \end{array}
\right.
\end{align*}
from the recursive equation $T_{n+1}=\widetilde{A}_nT_n,$ where we use the notation
\begin{align*}
T_n = \begin{pmatrix}
T_n^{(11)} & T_n^{(12)} \\
T_n^{(21)} & T_n^{(22)}
\end{pmatrix}.
\end{align*}
Since $\rho(A_0) < 1,$ we know that there exists a matrix norm $\Vert \cdot \Vert_*= \sup(\vert \cdot u \vert_* ; u \in \mathbb{C}^p, \vert u \vert_* = 1)$ satisfying $\Vert A_0 \Vert_* < 1$ (see Proposition 2.3.15 of \cite{Duf97}). It follows that

\begin{align*}
    \left\{
         \begin{array}{ll}
            \Vert T_{n+1}^{(11)} \Vert_* \leqslant a_0\Vert T_n^{(11)}   
             \Vert_*+ \vert \beta_n \vert a_0 \Vert T_n^{(21)} \Vert_* \\
            \Vert T_{n+1}^{(21)} \Vert_* \leqslant \vert \beta_n \vert 
            \Vert T_n^{(11)} \Vert_* + \Vert T_n^{(21)} \Vert_*
         \end{array}
    \right.
\end{align*} 
where, for a better readability, we set $a_0=\Vert A_0 \Vert_*.$ From the previous relations, there is some constant $C >0$ such that
\begin{equation}\label{major_lem4.2}
\Vert T_{n+1}^{(11)} \Vert_* + \Vert T_n^{(21)} \Vert_* \leqslant C\prod_{i=1}^n \Vert H_i \Vert,
\end{equation}
where
\begin{align*}
\forall i \geqslant 1, \ H_i = \begin{pmatrix}
a_0 & \vert \beta_i \vert a_0 \\
\vert \beta_i \vert & 1
\end{pmatrix}.
\end{align*}
Now, to evaluate $\Vert H_i \Vert,$ one is going to study the spectrum of $H_i^*H_i.$ Let 
\begin{align*}
P_i(\lambda)=\lambda^2 - \lambda(1+a_0^2)(1+\beta_i^2) + a_0^2(1-\beta_i^2)^2
\end{align*}
be the characteristic polynomial of $H_i^*H_i$ defined for all $ \lambda \in \mathbb{C}.$ Then, a straightforward calculation gives 
\begin{equation}\label{determinant_lem4.2}
\Delta_i= [(1-a_0)^2+\beta_i^2(1+a_0)^2][(1+a_0)^2 + \beta_i(1-a_0)^2]>0
\end{equation}
as discriminant of the equation $P_i(\lambda)=0,$ thus leading to real eingenvalues 
\begin{align*}
\lambda_{k,i}= \frac{(1-a_0^2)(1+\beta_i^2) \pm \sqrt{\Delta_i}}{2}, \ k=1,2.
\end{align*}
Since we recall that $\beta_n \rightarrow 0,$ a Taylor expansion of $\sqrt{\Delta_i}$ enables to write

\begin{align*}
\lambda_{k,i} = \frac{(1+a_0^2)(1+\beta_i^2)}{2} \pm \frac{1-a_0^2}{2}[1+K_i\beta_i^2+o(\beta_i^2)]
\end{align*}
for some easily identifiable $\vert K_i \vert =O(1),$ as $i \rightarrow \infty.$ Consequently,
\begin{align*}
\lambda_{1,i}=a_0^2+O(\beta_i^2) \ \text{and} \ \lambda_{2,i} = 1 + O(\beta_i^2)
\end{align*}
which clearly gives $\Vert H_i \Vert^2 = 1 + O(\beta_i^2).$ This rate together with (\ref{major_lem4.2}) and (\ref{beta_sommable_P1}) are sufficient to get $\Vert T_{n+1}^{(11)} \Vert_* + \Vert T_{n+1}^{(21)} \Vert_* < \infty.$ Since the same reasoning holds for $\Vert T_{n+1}^{(12)} \Vert_* + \Vert T_{n+1}^{(22)} \Vert_*,$ the desired result is proved through the definition of $T_n$.
\end{proof}

\noindent The next lemma is interested in the a.s. convergence of $\left(\beta_n\zeta_n\right)$.

\begin{lem}\label{lem_Tech3_P4}
Consider the process $(\zeta_n)$ defined in (\ref{nouveau_zeta_P1}). Then,
\begin{equation}\label{beta_zeta_zero}
{\Vert \beta_n \zeta_n \Vert}\xrightarrow[n\to\infty]{a.s.}0.
\end{equation}
\end{lem}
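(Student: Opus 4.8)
The plan is to exploit the Gaussian structure of $\zeta_n$ and to reduce the statement to Lemma \ref{Lem_Tech1_P4}. First I would unroll the recursion (\ref{nouveau_zeta_P1}) from $\zeta_0 = 0$ to obtain the explicit representation
\[ \zeta_n = \sum_{k=1}^n \Phi(n,k)\,\ell\,\sigma_k\varepsilon_k, \]
where $\Phi(n,k) = \widetilde{A}_{n-1}\widetilde{A}_{n-2}\cdots\widetilde{A}_k$ for $k<n$ and $\Phi(n,n) = Id_{2p}$. Since the $(\varepsilon_k)$ are i.i.d. $\mathcal{N}(0,1)$ and the matrices $\Phi(n,k)$ are deterministic, $\zeta_n$ is a centered Gaussian vector, so that $\beta_n\zeta_n \sim \mathcal{N}(0,A_n)$ with $A_n = \beta_n^2 V_n$ and $V_n := \cov(\zeta_n)$. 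It therefore suffices to verify the hypothesis $\Vert A_n\Vert = O(n^{-\delta})$ of Lemma \ref{Lem_Tech1_P4}.

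Next I would control $V_n$. Using the independence of $\varepsilon_n$ and $\zeta_{n-1}$, the covariance obeys the Lyapunov-type recursion
\[ V_n = \widetilde{A}_{n-1}V_{n-1}\widetilde{A}_{n-1}^* + \sigma_n^2\,\ell\ell^*, \]
equivalently $V_n = \sum_{k=1}^n \sigma_k^2\,\Phi(n,k)\,\ell\ell^*\,\Phi(n,k)^*$. The crucial estimate is the uniform boundedness of the partial products $\Phi(n,k)$. This follows from the very computation of Lemma \ref{Lem_Tech2_P4}: the block recursion bounds $\Vert \Phi(n,k)\Vert$ by a constant times a product of consecutive factors $\Vert H_i\Vert = (1+O(\beta_i^2))^{1/2}$, and this product stays bounded, uniformly in $1\le k\le n$, because $\sum_i\beta_i^2 < \infty$ by (\ref{beta_sommable_P1}). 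Combined with $\sigma_k^2 \le 1$ (immediate from (\ref{Durbin_var_P1})), this yields $\Vert V_n\Vert \le C\,n$ for some constant $C$.

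Finally, invoking $(H_3)$ in the form $\beta_n^2 = O(n^{-\alpha})$ with $\alpha > 1$, I would conclude
\[ \Vert A_n\Vert = \beta_n^2\,\Vert V_n\Vert = O(n^{1-\alpha}) = O(n^{-\delta}), \qquad \delta := \alpha - 1 > 0, \]
and an application of Lemma \ref{Lem_Tech1_P4} then delivers $\Vert\beta_n\zeta_n\Vert \to 0$ $a.s.$, which is the claim.

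The hard part is the control of $V_n$. Because the lower-right block of $\widetilde{A}_n$ equals $Id_p$, the partial products $\Phi(n,k)$ do not decay as $n-k\to\infty$; consequently one cannot expect $\sup_n\Vert V_n\Vert < \infty$ from norm estimates alone, and the variance of $\zeta_n$ genuinely grows, at most linearly in $n$. The argument works precisely because the strengthened decay $\alpha>1$ imposed in $(H_3)$ — rather than mere summability of $(\beta_n^2)$ — leaves exactly the margin $n^{1-\alpha}\to 0$ needed to absorb this linear growth, which is what makes this assumption indispensable here.
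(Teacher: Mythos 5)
Your proof is correct and follows essentially the same route as the paper: unroll the recursion (\ref{nouveau_zeta_P1}), bound the covariance of $\zeta_n$ by $Cn$ using the uniform boundedness of the partial products of the $\widetilde{A}_i$ established in (the computation of) Lemma \ref{Lem_Tech2_P4}, invoke $(H_3)$ to get $\Vert \beta_n^2 \cov(\zeta_n)\Vert = O(n^{-\delta})$, and conclude by Lemma \ref{Lem_Tech1_P4}. If anything, you are slightly more careful than the paper, in making explicit that $\zeta_n$ is Gaussian (needed for Lemma \ref{Lem_Tech1_P4}) and that the uniform bound on $\Phi(n,k)$ comes from the argument of Lemma \ref{Lem_Tech2_P4} rather than from its statement, which only covers products starting at $\widetilde{A}_0$.
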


\begin{proof}
The autoregressive relation leads to
\begin{align*}
\zeta_n = \sum_{k=1}^{n-1} \left(\prod_{i=1}^k \widetilde{A}_{n-i}\right)\ell \sigma_{n-k}\varepsilon_{n-k} + \ell \sigma_n\varepsilon_n.
\end{align*}
Thus,
\begin{align*}
\mathbb{E}\left(\beta_n^2\zeta_n\zeta_n^*\right) = & \beta_n^2 \sum_{k=1}^n P_{k,n}\ell\sigma_{n-k}^2\mathbb{E}(\varepsilon_{n-k}^2)\ell^*P_{k,n}^* + \beta_n^2\ell \ell^*\sigma_n^2\mathbb{E}(\varepsilon_n^2) \\
= & \beta_n^2 \left( \sum_{k=1}^n P_{k,n}\ell \sigma_{n-k}^2\ell^*P_{k,n}^* + \ell\sigma_n^2\ell^*\right)
\end{align*}
where, for $n-1 \geqslant k \geqslant 1, \ P_{k,n} =\widetilde{A}_{n-1} ... \widetilde{A}_{n-k}.$ From Lemma \ref{Lem_Tech2_P4},
\begin{align*}
\left\Vert \mathbb{E}\left(\beta_n^2\zeta_n\zeta_n^*\right) \right\Vert \leqslant \beta_n^2\left(\sum_{k=1}^n \Vert P_{k,n} \Vert^2 \sigma_{n-k}^2 + \sigma_n^2\right) \leqslant Kn\beta_n^2
\end{align*}
for some $K>0$ and a sufficiently large $n.$ Thus, from hypothesis $(H_3),$ 
\begin{align*}
\left\Vert \mathbb{E}\left(\beta_n^2\zeta_n\zeta_n^*\right) \right\Vert \leqslant \frac{K}{n^{\delta}}
\end{align*}
for some $\delta>0,$ and Lemma \ref{Lem_Tech1_P4} gives the result.
\end{proof}

\noindent We can now use the previous Lemmas to prove the Proposition \ref{trajectoire_zeta_P2}.

\subsection{Proof of Proposition \ref{trajectoire_zeta_P2}}\label{Preuve_1}
\begin{proof}
The direct calculation shows that
\begin{align*}
\zeta_n^{(1)} - \gamma_n = A_0(\zeta_{n-1}^{(1)} - \gamma_{n-1}) + \beta_{n-1}A_0\zeta_{n-1}^{(2)} + \ell^1(\sigma_n - \sigma_{\infty})\varepsilon_n.
\end{align*}
From Lemma \ref{lem_Tech3_P4}, ${\Vert \beta_{n-1}A_0\zeta_{n-1}^{(2)} \Vert}\xrightarrow[n\to\infty]{a.s.}0$ and, from (\ref{var_asympto_innov_P1}) and the normality of  $(\varepsilon_n), \\ {\Vert \ell^1(\sigma_n-\sigma_{\infty})\varepsilon_n \Vert}\xrightarrow[n\to\infty]{a.s.}0.$ Hence, using the same norm as the proof of Lemma \ref{Lem_Tech2_P4}, for all $\eta >0,$ there exists a random $n_0$ such that, for all $n \geqslant n_0,$
\begin{align*}
\left\Vert \zeta_n^{(1)} - \gamma_n \right\Vert_* \leqslant \Vert A_0 \Vert_* \left\Vert \zeta_{n-1}^{(1)} - \gamma_{n-1} \right\Vert_* + \eta \ \ a.s.
\end{align*}
Since $\Vert A_0 \Vert_* <1,$ we conclude that ${\Vert \zeta_n^{(1)} - \gamma_n\Vert}\xrightarrow[n\to\infty]{a.s.}0.$
\end{proof}

\begin{lem}\label{Tech4.1}
Under $\mathbb{P}_{\theta_0}^{(n)}$ we have,
\begin{equation}\label{loi_ratio_tech4.1_P4}
{2\log\left( \frac{\mathcal{L}(\widehat{\theta}_n,X^{(n)})}{\mathcal{L}(\theta_0,X^{(n)})}\right)}\xrightarrow[n\to\infty]{\mathcal{L}}{\chi_p^2}.
\end{equation}
\end{lem}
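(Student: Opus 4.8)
The plan is to exploit the fact that, by inspection of (\ref{log_vraisemblance_P1}), the log-likelihood is an \emph{exact} quadratic function of $\theta$: the innovation variances $\sigma_i^2$ are computed from the nuisance covariance alone and carry no dependence on $\theta$, while each residual $\ell^*(\zeta_i-\widetilde A_{i-1}\zeta_{i-1})=\ell^*\zeta_i-\theta^*a_{i-1}^*\zeta_{i-1}$ is affine in $\theta$. Hence the local reparametrisation $\theta=\theta_0+u/\sqrt n$ already used in Theorem \ref{Theo_LAN_AR(p)_P3} produces no higher-order error beyond the displayed bracket terms; expanding the square gives the exact identity
$$\log\frac{\mathcal L(\theta_0+u/\sqrt n,X^{(n)})}{\mathcal L(\theta_0,X^{(n)})}=\Big\langle u,\frac{M_n}{\sqrt n}\Big\rangle-\frac12\Big\langle u,\frac{\langle M\rangle_n}{n}u\Big\rangle,$$
with $M_n$ and $\langle M\rangle_n$ as in (\ref{decompo_MLE_P1}). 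In the language of Theorem \ref{Theo_LAN_AR(p)_P3} this pins down the remainder as $R_n(\theta_0,u)=-\tfrac12\langle u,(\langle M\rangle_n/n-I(\theta_0))u\rangle$.

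The second step is to substitute the MLE. Writing $\widehat u_n=\sqrt n(\widehat\theta_n-\theta_0)$ and invoking the decomposition (\ref{decompo_MLE_P1}), one has $\widehat u_n=(\langle M\rangle_n/n)^{-1}(M_n/\sqrt n)$. Plugging $u=\widehat u_n$ into the identity above and using this expression for $\widehat u_n$, both the linear term $\langle\widehat u_n,M_n/\sqrt n\rangle$ and the quadratic form $\langle\widehat u_n,(\langle M\rangle_n/n)\widehat u_n\rangle$ reduce to one and the same scalar $(M_n/\sqrt n)^*(\langle M\rangle_n/n)^{-1}(M_n/\sqrt n)$, so that
$$2\log\frac{\mathcal L(\widehat\theta_n,X^{(n)})}{\mathcal L(\theta_0,X^{(n)})}=\Big(\frac{M_n}{\sqrt n}\Big)^*\Big(\frac{\langle M\rangle_n}{n}\Big)^{-1}\Big(\frac{M_n}{\sqrt n}\Big).$$
This exact identity is the crux of the argument: the genuine quadratic structure is precisely what lets us bypass the usual difficulty of controlling the LAN remainder $R_n(\theta_0,\widehat u_n)$ at the random, merely tight point $\widehat u_n$, which would otherwise require uniform control of $R_n$ over compact balls in $u$.

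Finally I would pass to the limit by a Slutsky argument. By (\ref{proba_inf_Fisher_P1}), $\langle M\rangle_n/n\to I(\theta_0)$ in $\mathbb P_{\theta_0}^{(n)}$-probability, and by (\ref{loi_martingale_P1}), $M_n/\sqrt n\to\mathcal N(0,I(\theta_0))$ in law. Since the first limit is deterministic, the pair $(M_n/\sqrt n,\langle M\rangle_n/n)$ converges jointly to $(Z,I(\theta_0))$ with $Z\sim\mathcal N(0,I(\theta_0))$, and the map $(s,F)\mapsto s^*F^{-1}s$ is continuous at $(Z,I(\theta_0))$ because $I(\theta_0)$ is invertible, being the unique solution of the Lyapunov equation (\ref{Lyapu_P1}). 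The continuous mapping theorem then yields convergence in law to $Z^*I(\theta_0)^{-1}Z$. Writing $Z=I(\theta_0)^{1/2}N$ with $N\sim\mathcal N(0,Id_p)$ gives $Z^*I(\theta_0)^{-1}Z=\Vert N\Vert^2\sim\chi_p^2$, which is the claim. The only points needing care are the invertibility of $\langle M\rangle_n/n$ for large $n$, guaranteed asymptotically by its convergence to the invertible matrix $I(\theta_0)$, and the joint-convergence step, both of which are routine.
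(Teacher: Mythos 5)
Your proof is correct and follows essentially the same route as the paper: the paper likewise uses the exact quadratic LAN decomposition from the proof of Theorem \ref{Theo_LAN_AR(p)_P3}, substitutes $\sqrt{n}(\widehat{\theta}_n-\theta_0)=(\langle M\rangle_n/n)^{-1}(M_n/\sqrt{n})$ via (\ref{decompo_MLE_P1}) to reduce the statistic to $\big\langle \langle M\rangle_n^{-\frac{1}{2}}M_n,\langle M\rangle_n^{-\frac{1}{2}}M_n\big\rangle$, and concludes by Slutsky from (\ref{proba_inf_Fisher_P1}) and (\ref{loi_martingale_P1}). Your write-up is in fact slightly more explicit than the paper's on the exactness of the quadratic expansion (which justifies plugging in the random point $\widehat{\theta}_n$) and on the joint-convergence and continuous-mapping step.
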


\begin{proof}
By using the decomposition in the proof of Theorem \ref{Theo_LAN_AR(p)_P3}, we have,
\begin{align*}
2\log\left( \frac{\mathcal{L}(\widehat{\theta}_n,X^{(n)})}{\mathcal{L}(\theta_0,X^{(n)})}\right) = & 2\big\langle \sqrt{n}(\widehat{\theta}_n-\theta_0), \frac{M_n}{\sqrt{n}}\big\rangle - \big\langle \sqrt{n}(\widehat{\theta}_n-\theta_0), I(\theta_0)\sqrt{n}(\widehat{\theta}_n-\theta_0)\big\rangle \\
& - \big\langle \sqrt{n}(\widehat{\theta}_n-\theta_0),(\frac{\langle M \rangle_n}{n} - I(\theta_0))\sqrt{n}(\widehat{\theta}_n-\theta_0)\big\rangle \\
= & \big\langle \langle M \rangle_n^{-\frac{1}{2}} M_n,\langle M \rangle_n^{-\frac{1}{2}} M_n \big\rangle.
\end{align*}
Thus (\ref{loi_ratio_tech4.1_P4}) follows immediately from (\ref{proba_inf_Fisher_P1}) and (\ref{loi_martingale_P1}).
\end{proof}

\noindent The following lemma can be seen as a matrix Toeplitz lemma, in some sense (see Theorem 1.1 in \cite{LH2017}).

\begin{lem}\label{lem_Toep}
Let $(B_{k,n})$ and $(A_n)$ be two sequences of square matrices such that
\begin{itemize}
      \item ${\Vert A_n - A \Vert}\xrightarrow[n\to\infty]{}0,$
      \item ${\Vert \sum_{k=1}^n B_{k,n} - B \Vert}\xrightarrow[n\to\infty]{}0,$
      \item $\sum_{k=1}^n \Vert B_{k,n} \Vert$ is bounded with respect to $n,$
      \item for all $n_0 >0$, \ ${\sum_{k=1}^{n_0} \Vert B_{k,n} \Vert}\xrightarrow[n\to\infty]{}0.$
\end{itemize}
Then,
\begin{equation}\label{CV_Toep}
{ \left\Vert \sum_{k=1}^n A_kB_{k,n} -AB \right\Vert}\xrightarrow[n\to\infty]{}0.
\end{equation}
\end{lem}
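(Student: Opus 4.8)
The plan is to adapt the classical scalar Toeplitz lemma, using only submultiplicativity of the matrix norm $\Vert \cdot \Vert$ and an $\varepsilon$-splitting of the sum. First I would introduce the telescoping decomposition
\begin{align*}
\sum_{k=1}^n A_kB_{k,n} - AB = \sum_{k=1}^n (A_k-A)B_{k,n} + A\left(\sum_{k=1}^n B_{k,n} - B\right),
\end{align*}
which isolates the contribution of the row sums from that of the fluctuations $A_k-A$.

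The second term is immediate: by submultiplicativity,
\begin{align*}
\left\Vert A\left(\sum_{k=1}^n B_{k,n} - B\right)\right\Vert \leqslant \Vert A \Vert \left\Vert \sum_{k=1}^n B_{k,n} - B \right\Vert,
\end{align*}
and this tends to $0$ by the second hypothesis. So everything reduces to controlling $\Vert \sum_{k=1}^n (A_k-A)B_{k,n}\Vert$. Since $\Vert A_k - A\Vert \to 0$, the sequence $(\Vert A_k - A\Vert)$ is bounded, say by $M$, and for any fixed $\varepsilon > 0$ there is an index $n_0$ (depending on $\varepsilon$ but not on $n$) with $\Vert A_k - A \Vert < \varepsilon$ for all $k > n_0$. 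I would then split
\begin{align*}
\sum_{k=1}^n (A_k-A)B_{k,n} = \sum_{k=1}^{n_0} (A_k-A)B_{k,n} + \sum_{k=n_0+1}^n (A_k-A)B_{k,n}.
\end{align*}

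The head is bounded by $M\sum_{k=1}^{n_0} \Vert B_{k,n}\Vert$, which tends to $0$ as $n \to \infty$ by the fourth hypothesis (with $n_0$ now frozen). The tail is bounded by $\varepsilon \sum_{k=n_0+1}^n \Vert B_{k,n}\Vert \leqslant \varepsilon \sup_m \sum_{k=1}^m \Vert B_{k,m}\Vert =: \varepsilon S$, finite by the third hypothesis. Taking $\limsup$ in $n$ therefore yields $\limsup_{n\to\infty} \Vert \sum_{k=1}^n (A_k-A)B_{k,n}\Vert \leqslant \varepsilon S$, and since $\varepsilon > 0$ is arbitrary this limsup is $0$. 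Combined with the vanishing of the second term, this gives (\ref{CV_Toep}). The only genuine subtlety, and the step I would be most careful about, is the order of quantifiers: $n_0$ must be chosen from the $\varepsilon$-control on $(A_k - A)$ and held fixed \emph{before} letting $n \to \infty$, so that the fourth hypothesis can kill the head; the uniform boundedness in the third hypothesis is precisely what is needed to keep the tail under control uniformly in $n$ while $\varepsilon$ is sent to $0$.
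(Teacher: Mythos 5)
Your proof is correct and follows essentially the same route as the paper's: the identical decomposition $\sum_k (A_k-A)B_{k,n} + A(\sum_k B_{k,n} - B)$, the same $\varepsilon$-splitting of the first sum at a fixed index $n_0$, and the same use of the four hypotheses to kill the head, the tail, and the second term. Your write-up is in fact slightly more careful than the paper's, which ends with ``the combination of the whole hypotheses'' where you spell out the $\limsup$ argument and the order of quantifiers explicitly.
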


\begin{proof}
We have,
\begin{align*}
\sum_{k=1}^n A_kB_{k,n} -AB = \sum_{k=1}^n A_kB_{k,n} - \sum_{k=1}^n AB_{k,n} + \sum_{k=1}^n AB_{k,n} - AB.
\end{align*}
For all $\varepsilon >0,$ one can find $n_0 >0$ such that, for all $n \geqslant n_0, \ \Vert A_n - A\Vert < \varepsilon.$  Thus,
\begin{align*}
\left\Vert \sum_{k=1}^n A_kB_{k,n} -AB \right\Vert \leqslant & \left\Vert \sum_{k=1}^n (A_k-A)B_{k,n} \right\Vert + \left\Vert \sum_{k=1}^n A(B_{k,n}-B) \right\Vert \\
 \leqslant & \sum_{k=1}^{n_0-1} \Vert A_k - A \Vert \Vert B_{k,n} \Vert + \varepsilon \sum_{k=n_0}^n \Vert B_{k,n} \Vert + \Vert A \Vert \left\Vert \sum_{k=1}^n B_{k,n} - B \right\Vert.
\end{align*}
The combination of the whole hypotheses enables to show that the right-hand side of latter expression can be made arbritrarily small, as $n$ tends to infinity.
\end{proof}

\begin{rem}\label{Rem5}
The last result can be easily extended when hypotheses holds $a.s$.
\end{rem}

\begin{rem}\label{Rem6}
Given two sequences of vectors $(u_n)$ and $(v_n) \in \mathbb{R}^d.$ We have $u_n v_n^* =\overline{u}_n\overline{v}_n^*,$ where 

\begin{align*}
\overline{u}_n = \begin{pmatrix}
u_{11} & 0 & \dots & \dots & 0 \\
u_{21} & 0 & \dots & \dots & 0 \\
\dots & \dots & \dots & \dots & \dots \\
u_{(d-1)1} & 0 & \dots & \dots & 0 \\
u_{d1} & 0 & \dots & \dots & 0 
\end{pmatrix}.
\end{align*}

\noindent More precisely, $\overline{u}_n$ is $d \times d$ matrix with $u_n$ for first column and 0 elsewhere. Then $\Vert u_n v_n^* \Vert \leqslant \Vert \overline{u}_n \Vert \Vert \overline{v}_n^* \Vert,$ and, since we work in finite dimension, ${\Vert \overline{u}_n - \overline{u}\Vert}\xrightarrow[n\to\infty]{}0$ if and only if each component of $\overline{u}_n - u$ converges to $0.$
\end{rem}

\begin{lem}\label{A0_ouvert}
$\Theta$ is an open subset of $\mathbb{R}^p.$
\end{lem}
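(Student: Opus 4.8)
The plan is to realise $\Theta$ as the set of points admitting a neighbourhood on which $\rho(A_0) < 1$. Rather than appealing to any delicate regularity of the spectral radius, I would use only the submultiplicative inequality $\rho(A) \leqslant \Vert A \Vert$ together with Proposition 2.3.15 of \cite{Duf97}, the very tool already exploited in the proof of Lemma \ref{Lem_Tech2_P4}. The argument then reduces to a continuity statement.

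First I would note that the entries of $A_0$ are affine functions of $\theta = (\theta_1,\dots,\theta_p)$: the first row is $\theta$ itself, while all the remaining entries are the fixed $0$'s and $1$'s dictated by the companion structure. Consequently the map $\theta \mapsto A_0 = A_0(\theta)$ is continuous from $\mathbb{R}^p$ into the space of $p \times p$ matrices, regardless of the norm placed on the latter.

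Next, I would fix an arbitrary $\theta^{(0)} \in \Theta$ and write $A_0^{(0)}$ for the associated matrix, so that $\rho(A_0^{(0)}) < 1$. By Proposition 2.3.15 of \cite{Duf97} there exists an operator norm $\Vert \cdot \Vert_*$ with $\Vert A_0^{(0)} \Vert_* < 1$. Since all norms on a finite-dimensional space are equivalent, the map $\theta \mapsto \Vert A_0(\theta) \Vert_*$ is continuous; combined with the previous step, there is a neighbourhood $V$ of $\theta^{(0)}$ on which $\Vert A_0(\theta) \Vert_* < 1$. For every $\theta \in V$, submultiplicativity of $\Vert \cdot \Vert_*$ gives $\rho(A_0(\theta)) \leqslant \Vert A_0(\theta) \Vert_* < 1$, hence $V \subset \Theta$. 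As $\theta^{(0)}$ was arbitrary, $\Theta$ is open.

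The only subtlety to flag is that the norm $\Vert \cdot \Vert_*$ furnished by \cite{Duf97} is adapted to the single matrix $A_0^{(0)}$ and therefore depends on $\theta^{(0)}$. This is harmless: I keep that norm fixed while letting $\theta$ vary, invoking only the continuity of $\theta \mapsto \Vert A_0(\theta) \Vert_*$ near $\theta^{(0)}$. This is precisely what lets me bypass the genuine difficulty that the spectral radius, viewed as a function of the matrix entries, is merely continuous and not smooth (eigenvalues may collide), which is why the proof leans on the norm bound rather than on any differentiability of $\rho$.
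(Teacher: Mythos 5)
Your proof is correct, but it takes a genuinely different route from the paper's. The paper never works with the matrix $A_0$ at all: it reformulates $(H_1)$ through the autoregressive polynomial $P(z)=1-\theta_1 z-\dots-\theta_p z^p$, invoking the classical equivalence (from \cite{BD1991}) that $\rho(A_0)<1$ if and only if $P$ has no zero in the closed unit disk. Openness then follows from a triangle inequality on polynomials: for the perturbed parameter $\theta+\alpha$ one has $\vert Q_{\alpha}(z)\vert \geqslant \vert P(z)\vert - \vert\alpha_1\vert-\dots-\vert\alpha_p\vert$ on the closed disk, and since $\inf_{\vert z\vert\leqslant 1}\vert P(z)\vert=\delta_{min}>0$ by compactness, any perturbation with $\vert\alpha_1\vert+\dots+\vert\alpha_p\vert<\delta_{min}$ stays in $\Theta$. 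Your argument instead stays at the matrix level: you fix the adapted operator norm of Proposition 2.3.15 of \cite{Duf97} (the very tool the paper uses in Lemma \ref{Lem_Tech2_P4}), compose the continuity of that fixed norm with the affine map $\theta\mapsto A_0(\theta)$, and conclude with $\rho\leqslant\Vert\cdot\Vert_*$; in effect you establish upper semicontinuity of the spectral radius at $\theta^{(0)}$, which is exactly what openness of the sublevel set requires, and your handling of the dependence of $\Vert\cdot\Vert_*$ on the base point $\theta^{(0)}$ (fix the norm, move the parameter) is the right way to make this rigorous. As for what each approach buys: yours is more general --- it applies verbatim to any continuously parametrized family of matrices, with no companion structure needed --- and it recycles machinery already present in the paper; the paper's argument is tailored to companion matrices but yields an explicit, quantitative radius $\delta_{min}$, which Remark \ref{Stabilité_espace_param} immediately reuses to guarantee that $\theta+\frac{Ru}{\sqrt{n}}\in\Theta$ for every rotation $R$ and $n$ large enough. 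With your proof that remark still goes through --- an open neighbourhood contains some Euclidean ball, which is rotation-invariant --- but the explicit constant is lost.
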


\begin{proof}
Let
\begin{align*}
P(z)=1-\theta_1z - \dots \theta_p z^p,
\end{align*}
a polynomial function defined on $\mathbb{C}.$ $(H_1)$ is equivalent at the following condition (see \cite{BD1991}) : 
\begin{equation}\label{reformu_parametric}
\text{if} \ \vert z \vert \leqslant 1 \ \text{then} \ \vert P(z) \vert > 0.
\end{equation}
\noindent Let 
\begin{align*}
\alpha=(\alpha_1,\alpha_2,\dots,\alpha_p) \in \mathbb{R}^p \ \text{and} \ Q_{\alpha}(z)=1-(\theta_1+\alpha_1)z - \dots - (\theta_p + \alpha_p)z^p,
\end{align*}
then, for all $\vert z \vert \leqslant 1,$
\begin{align*}
\vert Q_{\alpha}(z)\vert & = \vert P(z) - \alpha_1 z - \dots - \alpha_p z^p \vert \\
& \geqslant \vert P(z) \vert - \vert \alpha_1 z + \dots + \alpha_p z^p \vert \\
& \geqslant \vert P(z) \vert - \vert \alpha_1 \vert - \dots - \vert \alpha_p \vert.
\end{align*}
Condition (\ref{reformu_parametric}) ensures that
\begin{align*}
\inf\limits_{z \in D(0;1)} \ \vert P(z) \vert = \delta_{min} > 0.
\end{align*}
It remains to choose $\vert \alpha_1 \vert + \dots + \vert \alpha_p \vert < \delta_{min}$ in order to reach the desired result. 
\end{proof}

\begin{rem}\label{Stabilité_espace_param}
The last Lemma ensures that if $\theta \in \Theta$ then for all $u \in \mathbb{R}^p$ and for $n$ big enough, $\theta + \frac{u}{\sqrt{n}} \in \Theta.$
Take the notation of the last proof and choose $\alpha \in \mathbb{R}^p$ such that $\Vert \alpha \Vert < \delta_{min},$ then, for any rotation $R$ from $\mathbb{R}^p$ to $\mathbb{R}^p$ and $n$ big enough, $\theta + \frac{Ru}{\sqrt{n}} \in \Theta.$
\end{rem}

\section{Proofs of the mains results.}\label{Sec5}♣

\

\subsection{Proof of Theorem \ref{Theo1_P2}}\label{Preuve_2}
\begin{proof}

We have,
\begin{align*}
\frac{\langle M \rangle_n}{n} \ = & \ \frac{1}{n} \sum_{i=1}^n \frac{(\zeta_{i-1}^{(1)}+\beta_{i-1}\zeta_{i-1}^{(2)}-\gamma_{i-1}+\gamma_{i-1})(\zeta_{i-1}^{(1)}+\beta_{i-1}\zeta_{i-1}^{(2)}-\gamma_{i-1}+\gamma_{i-1})^*}{\sigma_i^2} \\
= & \ S_n + r_n
\end{align*}

\noindent where

\begin{align*}
S_n = \frac{1}{n} \sum_{k=1}^n \frac{\gamma_{i-1}\gamma_{i-1}^*}{\sigma_i^2}.
\end{align*}
The remainder term $r_n$ is shown to be negligible via Ces\`{a}ro's theorem as well as the ergodicty of $(\gamma_n)$, Lemma \ref{lem_Tech3_P4}, Proposition \ref{trajectoire_zeta_P2}, Lemma \ref{lem_Toep} and Remark \ref{Rem6}. A direct application of the ergodic theorem together with Lemma \ref{lem_Toep} leads to
\begin{align*}
{S_n}\xrightarrow[n\to\infty]{a.s.}{\frac{1}{\sigma_{\infty}^2}\mathbb{E}(\gamma_0\gamma_0^*)},
\end{align*}
and finally thanks to (\ref{proba_inf_Fisher_P1}),
\begin{align*}
{S_n}\xrightarrow[n\to\infty]{a.s.}{I(\theta)}.
\end{align*}
Thus,
\begin{equation}\label{conv_crochet_ps}
{\frac{\langle M \rangle_n}{n}}\xrightarrow[n\to\infty]{a.s.}{I(\theta)}.
\end{equation}
By using the fact that $\varepsilon_n$ is independent of $\gamma_{n-1}$ and similar arguments as in this proof, we have,
\begin{equation}\label{TH1.4_P4}
{\frac{M_n}{n}}\xrightarrow[n\to\infty]{a.s.}0,
\end{equation}
leading to the strong consistency

\begin{align*}
{\widehat{\theta}_n}\xrightarrow[n\to\infty]{a.s.}{\theta}.
\end{align*}
\end{proof}

\noindent From now on, let $\bar{\mathcal{F}}_n$ be the $\sigma$-algebra $\bar{\mathcal{F}}_n=\bar{\mathcal{F}}_n(X_0, \dots, X_n, \gamma_0, \dots, \gamma_n)$ where, $(\gamma_n)$ is the process defined in Proposition \ref{trajectoire_zeta_P2}.

\begin{rem}\label{rem_new_filtration}
$(M_n)$ is a $\bar{\mathcal{F}}_n$-martingale and the introduction of $\bar{\mathcal{F}}_n$ is necessary in the following proof.
\end{rem}

\subsection{Proof of Theorem \ref{Theo2_LFQ_P2}}\label{Preuve_3}
\begin{proof}
In order to prove Theorem \ref{Theo2_LFQ_P2}, we will use the quadratic stong law for martingales (see Theorem 2.1 in \cite{CM00}).
Take $V_n = \sqrt{n}Id_p,$ a sequence of regular matrices in the sense of Chaabane and Maouia. Now, we studing the asymptotic behavior of
\begin{equation}\label{TH2.1_P4}
\frac{[M]_n}{n}= \frac{1}{n} \sum_{i=1}^n \left(\frac{(\zeta_{i-1}^{(1)}+\beta_{i-1}\zeta_{i-1}^{(2)}-\gamma_{i-1}+\gamma_{i-1})(\zeta_{i-1}^{(1)}+\beta_{i-1}\zeta_{i-1}^{(2)}-\gamma_{i-1}+\gamma_{i-1})^*}{\sigma_i^2}\right)\varepsilon_i^2.
\end{equation}
By using similar arguments as in  the previous proof we have,
\begin{equation}\label{TH2.2_P4}
{\frac{[M]_n}{n}}\xrightarrow[n\to\infty]{a.s.}{\mathbb{E}\left(\frac{\gamma_1\gamma_1^*}{\sigma_{\infty}^2}\right)}.
\end{equation}

\noindent Let us now look at the Lindeberg's condition, we have to show that, for all $\varepsilon>0,$

\begin{equation}\label{Lindeberg_condition}
{L_n=\frac{1}{n} \sum_{i=1}^n \mathbb{E}\left(\Vert \Delta M_i \Vert^2 \mathbbm{1}_{\left\lbrace \Vert \Delta M_i \Vert \geqslant \varepsilon\sqrt{n}\right\rbrace} \middle\vert \bar{\mathcal{F}}_{i-1}\right)}\xrightarrow[n\to\infty]{a.s.}0.
\end{equation}
\noindent Let $M>0$ and

\begin{align*}
L_{n,M} = \frac{1}{n} \sum_{i=1}^n \mathbb{E}\left(\Vert \Delta M_i \Vert^2 \mathbbm{1}_{\left\lbrace \Vert \Delta M_i \Vert \geqslant M\right\rbrace} \middle\vert \bar{\mathcal{F}}_{i-1}\right).
\end{align*}

\noindent From (\ref{decompo_MLE_P1}) and the definition of $a_n,$ we have $\Delta M_1=M_1$ and, for $n \geqslant 2,$

\begin{align*}
\Delta M_n = \frac{(\zeta_{n-1}^{(1)} + \beta_{n-1}\zeta_{n-1}^{(2)})\varepsilon_n}{\sigma_n}.
\end{align*}

\noindent It follows that

\begin{align*}
\Vert \Delta M_n \Vert^2 = & \left\Vert \frac{(\zeta_{n-1}^{(1)}-\gamma_{n-1} + \beta_{n-1}\zeta_{n-1}^{(2)}+\gamma_{n-1})\varepsilon_n}{\sigma_n}\right\Vert^2 \\
\leqslant & \frac{2\left\Vert\zeta_{n-1}^{(1)}-\gamma_{n-1} + \beta_{n-1}\zeta_{n-1}^{(2)}\right\Vert^2 \varepsilon_n^2}{\sigma_n^2}+\frac{2\Vert \gamma_{n-1}\Vert^2\varepsilon_n^2}{\sigma_n^2}.
\end{align*}

\noindent Then,

\begin{align*}
L_{n,M} \leqslant 2R_{1,n} + 2R_{2,n} + 2R_{3,n},
\end{align*}

\noindent where,

\begin{align*}
R_{1,n} = \frac{1}{n} \sum_{i=1}^n \frac{\left\Vert\zeta_{i-1}^{(1)}-\gamma_{i-1} + \beta_{i-1}\zeta_{i-1}^{(2)}\right\Vert^2}{\sigma_i^2},
\end{align*}

\begin{align*}
R_{2,n} = \frac{1}{n} \sum_{i=1}^n \mathbb{E}\left( \frac{\Vert \gamma_{i-1}\Vert^2}{\sigma_i^2} \varepsilon_i^2 \mathbbm{1}_{\left\lbrace \frac{\Vert \gamma_{i-1} \Vert \vert \varepsilon_i \vert}{\sigma_i} \geqslant \frac{M}{2} \right\rbrace} \middle\vert \bar{\mathcal{F}}_{i-1}\right),
\end{align*}

\noindent and,

\begin{align*}
R_{3,n} = \frac{1}{n} \sum_{i=1}^n \mathbb{E}\left( \frac{\Vert \gamma_{i-1} \Vert^2}{\sigma_i^2} \varepsilon_i^2 \mathbbm{1}_{\left\lbrace \frac{\left\Vert\zeta_{i-1}^{(1)}-\gamma_{i-1} + \beta_{i-1}\zeta_{i-1}^{(2)}\right\Vert \vert \varepsilon_i \vert}{\sigma_i} \geqslant \frac{M}{2} \right\rbrace} \middle\vert \bar{\mathcal{F}}_{i-1} \right).
\end{align*}

\noindent The same reasoning as above shows that $R_{1,n}$ tend to $0,$ $a.s.$ Let us focus on the more intricate terms $R_{2,n}$ and $R_{3,n}.$ First, we know from (\ref{var_asympto_innov_P1}) that, for some $0 < \widetilde{m} < \sigma_{\infty},$ there exists $n_0$ such that, for $n \geqslant n_0,$
 $\vert \sigma_n \vert \geqslant \widetilde{m}.$ Hence,

\begin{align*}
R_{2,n} \leqslant R_{2,n}^{(0)}+\frac{1}{n}\sum_{i=n_0}^n \mathbb{E}\left( \frac{\Vert \gamma_{i-1}\Vert^2}{\widetilde{m}^2} \varepsilon_i^2 \mathbbm{1}_{\left\lbrace \frac{\Vert \gamma_{i-1} \Vert \vert \varepsilon_i \vert}{\widetilde{m}} \geqslant \frac{M}{2} \right\rbrace} \middle\vert \bar{\mathcal{F}}_{i-1}\right),
\end{align*}
where obviously, ${R_{2,n}^{(0)}}\xrightarrow[n\to\infty]{a.s.}0.$ The process $(\gamma_{n-1}\varepsilon_n)$ is ergodic since  $(\varepsilon_n)$ is $i.i.d.$ and since there exist $\phi$ independent of $n$ such that $\gamma_{n-1} = \phi(\varepsilon_{n-1},\varepsilon_{n-2}, \dots)$ \ (See theorem 5.3.8 in \cite{stout74}). Thus,

\begin{align*}
\lim \limits_{n\to\infty} R_{2,n} \leqslant \mathbb{E}\left( \frac{\Vert \gamma_0 \Vert^2}{\widetilde{m}^2}\varepsilon_1^2 \mathbbm{1}_{\left\lbrace \frac{\Vert \gamma_0 \Vert}{\widetilde{m}} \vert \varepsilon_1\vert \geqslant \frac{M}{2} \right\rbrace} \middle\vert \bar{\mathcal{F}}_{i-1} \right),
\end{align*}
and letting $M \rightarrow \infty$ leads to ${R_{2,n}}\xrightarrow[n,M\to\infty]{a.s.}0.$ Then, by Cauchy-Schwarz's inequality,

\begin{align*}
R_{3,n}^2 \leqslant & \frac{1}{n^2} \sum_{i=1}^n \mathbb{E}\left( \frac{\Vert \gamma_{i-1} \Vert^4}{\sigma_i^4} \varepsilon_i^4 \middle\vert \bar{\mathcal{F}}_{i-1} \right)\sum_{i=1}^n \mathbb{E}\left(\mathbbm{1}_{\left\lbrace \frac{\left\Vert \zeta_{i-1}^{(1)} - \gamma_{i-1} + \beta_{i-1}\zeta_{i-1}^{(2)}\right\Vert \vert \varepsilon_i \vert}{\vert \sigma_i \vert}\geqslant \frac{M}{2} \right\rbrace} \middle\vert \bar{\mathcal{F}}_{i-1} \right)\\
\leqslant & \frac{3}{n^2} \sum_{i=1}^n \frac{\Vert \gamma_{i-1}\Vert^4}{\sigma_i^4} \sum_{i=1}^n \left[ \mathbbm{1}_{\left\lbrace \frac{\left\Vert \zeta_{i-1}^{(1)} - \gamma_{i-1} + \beta_{i-1}\zeta_{i-1}^{(2)}\right\Vert}{\vert\sigma_i\vert} \geqslant \frac{\sqrt{M}}{\sqrt{2}}\right\rbrace}+\mathbb{P}\left(\vert\varepsilon_1\vert \geqslant \frac{\sqrt{M}}{\sqrt{2}}\right)\right]. 
\end{align*}

\noindent Since ${\Vert \zeta_{i-1}^{(1)} - \gamma_{i-1} + \beta_{i-1}\zeta_{i-1}^{(2)} \Vert}\xrightarrow[n\to\infty]{a.s.}0$ by Proposition \ref{trajectoire_zeta_P2} and Lemma \ref{lem_Tech3_P4}, and since $(\varepsilon_n)$ is Gaussian, another application of the ergodic theorem is sufficient to ensure that ${R_{3,n}}\xrightarrow[n\to\infty]{a.s.}0,$ letting again $M \rightarrow \infty.$ Thus,

\begin{align*}
{L_{n,M}}\xrightarrow[n,M\to\infty]{a.s.}0 \ \text{and so} \ {L_{n}}\xrightarrow[n\to\infty]{a.s.}0.
\end{align*}
By the quadratic strong law for martingales,
\begin{equation}\label{TH2.5_P4}
{\frac{1}{p \log n}\sum_{k=1}^n \left(1- \frac{k^p}{(k+1)^p}\right) \frac{M_kM_k^*}{k}}\xrightarrow[n\to\infty]{a.s.}{I(\theta)}.
\end{equation}

\noindent Let $\lambda^{(k)}=\rho(M_kM_k^*)=\Vert M_kM_k^* \Vert \leqslant Tr(M_kM_k^*)$ because the matrices $M_kM_k^*$ are positives. \\
Put $v_i=(0,\dots,0,1,0,\dots,0)^*$ a vector of length $p$ with $1$ at the $i$-th coordinate and $0$ elsewhere. Then,

\begin{equation}\label{TH2.5_P5}
\frac{1}{p \log n}\sum_{k=1}^n \left(1- \frac{k^p}{(k+1)^p}\right)\frac{\lambda^{(k)}}{k} \leqslant \frac{1}{p \log n}\sum_{i=1}^{p} \sum_{k=1}^n \left(1- \frac{k^p}{(k+1)^p}\right)\frac{\langle v_i, M_kM_k^* v_i \rangle}{k} \ a.s.
\end{equation}

\noindent Since the right member of (\ref{TH2.5_P5}) is bounded, the conditions of Lemma \ref{lem_Toep} are satisfied, and we have,

\begin{equation}\label{TH2.6_P4}
\frac{1}{\log n}\sum_{k=1}^n (\widehat{\theta}_k-\theta)(\widehat{\theta}_k-\theta)^*= \frac{1}{\log n}\sum_{k=1}^n \langle M \rangle_k^{-1}M_kM_k^*\langle M \rangle_k^{-1}.
\end{equation}
Since $\left(1- \frac{k^p}{(k+1)^p}\right) \sim \frac{p}{k}$ by using (\ref{TH2.5_P4}), (\ref{TH2.6_P4}), (\ref{conv_crochet_ps}) together with Lemma \ref{lem_Toep}, we obtain,
\begin{equation}\label{TH2.7_P4}
{\frac{1}{\log n} \sum_{k=1}^n (\widehat{\theta}_k-\theta)(\widehat{\theta}_k-\theta)^*}\xrightarrow[n\to\infty]{a.s.}{I(\theta)^{-1}}.
\end{equation}
\end{proof}

\subsection{Proof of Proposition \ref{Prop_LLI}}\label{Preuve_3.1}
\begin{proof}
To prove the law of iterated logarithm, one is going to apply Lemma C.2 of \cite{Ber98}. We have already etablished (\ref{conv_crochet_ps}), so it remains to show that
\begin{align*}
\sum_{n=1}^{\infty} \left( \frac{\left\Vert a_n^* \zeta_n \right\Vert}{\vert \sigma_{n+1} \vert \sqrt{n}}\right)^{\beta} < \infty \ a.s.
\end{align*}
for some $\beta>2.$ From Proposition \ref{trajectoire_zeta_P2} and Lemma \ref{lem_Tech3_P4}, there exists a sequence $( \tau_n)$ such that
\begin{align*}
\Vert a_n^* \zeta_n \Vert = \Vert \gamma_n + \tau_n \Vert \ \text{and} \ \tau_n \rightarrow 0 \ a.s.
\end{align*}
Consequently, for a sufficiently large $n,$
\begin{align*}
\left( \frac{\left\Vert a_n^*\zeta_n\right\Vert}{\vert \sigma_{n+1}\vert \sqrt{n}}\right)^{\beta} \leqslant \frac{\Vert \tau_n + \gamma_n \Vert^{\beta}}{\widetilde{m}^{\frac{\beta}{2}}n^{\frac{\beta}{2}}} \leqslant \frac{\left(1+\Vert \gamma_n \Vert \right)^{\beta}}{\widetilde{m}^{\frac{\beta}{2}}n^{\frac{\beta}{2}}} \ a.s.
\end{align*}
where $\widetilde{m}$ is chosen as in the preceding proof. Since $(\gamma_n)$ is a Gaussian ergodic process, $\Vert \gamma_n \Vert = O(n^{\alpha})$ a.s for all $0 < \alpha < \frac{1}{2}$. The last inequality leads for $n$ sufficiently large to

\begin{align*}
\left( \frac{\left\Vert a_n^*\zeta_n\right\Vert}{\vert \sigma_{n+1}\vert \sqrt{n}}\right)^{\beta} \leqslant \frac{C}{\widetilde{m}^{\frac{\beta}{2}}n^{\beta\frac{1-2\alpha}{2}}} \ a.s.
\end{align*}
for a constant $C>0.$ The desired result follows when $\beta > \frac{2}{1-2\alpha}>2.$ Hence,
\begin{align*}
\limsup\limits_{n \to \infty} \left( \frac{n}{2\log \log n} \right)^{\frac{1}{2}} v^*{\langle M \rangle_n}^{-1}M_n & =  - \liminf\limits_{n \to \infty} \left( \frac{n}{2\log \log n} \right)^{\frac{1}{2}} v^*{\langle M \rangle_n}^{-1}M_n \\
& = (v^*I(\theta)^{-1}v)^{\frac{1}{2}} \ a.s.
\end{align*}
Let ${\widehat{\theta}_n}^i$ be the $i$-th component of $\widehat{\theta}_n$ and $v_i=(0,\dots,0,1,0,\dots,0)^*$ a vector of length $p$ with $1$ at the $i$-th coordinate and $0$ elsewhere. The inequality above implies immediatly (\ref{vitesse_MLE}). 
\end{proof}

\subsection{Proof of Theorem \ref{Theo_LAN_AR(p)_P3}}\label{Preuve_4}
\begin{proof}
Let 

\begin{align*}
\phi_n (\theta_0)=\frac{Id_p}{\sqrt{n}}, u=(u_1,u_2,\dots,u_p)^*,
\end{align*}

\begin{align*}
U_n=\begin{pmatrix}
u_1 & u_2 & \dots & u_p & \beta_n u_1 & \beta_n u_2 & \dots & \beta_n u_p \\
0 & 0 & \dots & \dots & \dots & \dots & \dots & 0 \\
\dots & \dots & \dots & \dots & \dots & \dots & \dots \\
0 & 0 & \dots & \dots & \dots & \dots & \dots & 0
\end{pmatrix} \ \text{and} \ \ell^*U_n= u^*a_n^*.
\end{align*}

\noindent Then,

\begin{align*}
\log \left( \frac{\mathcal{L}(\theta_0+\phi_n (\theta_0)u,X^{(n)})}{\mathcal{L}(\theta_0,X^{(n)})}\right) = & -\frac{1}{2}\sum_{k=1}^n \frac{\ell^*(2\zeta_k-2\widetilde{A}_{k-1}\zeta_{k-1}-\frac{U_{k-1}}{\sqrt{n}}\zeta_{k-1})\ell^*(-\frac{U_{k-1}}{\sqrt{n}}\zeta_{k-1})}{\sigma_k^2} \\
= & -\frac{1}{2} \sum_{k=1}^n \frac{\ell^*(2\ell\sigma_k\varepsilon_k - \frac{U_{k-1}}{\sqrt{n}}\zeta_{k-1})\ell^*(- \frac{U_{k-1}}{\sqrt{n}}\zeta_{k-1})}{\sigma_k^2} \\
= & \sum_{k=1}^n \left(\frac{u^*a_{k-1}^*\zeta_{k-1}\varepsilon_k}{\sqrt{n}\sigma_k} - \frac{1}{2}\frac{u^*a_{k-1}^*\zeta_{k-1}\zeta_{k-1}^*a_{k-1}u}{n\sigma_k^2}\right) \\
= & \big\langle u, \frac{M_n}{\sqrt{n}} \big\rangle - \frac{1}{2}\big\langle u, \frac{\langle M \rangle_n}{n} u \big\rangle \\
= & \big\langle u, \frac{M_n}{\sqrt{n}} \big\rangle - \frac{1}{2}\big\langle u, I(\theta_0)u \big\rangle - \frac{1}{2} \big\langle u,(\frac{\langle M \rangle_n}{n}-I(\theta_0)),u \big\rangle.
\end{align*}
Let $R_n(\theta_0,n) = \frac{1}{2}\big\langle u, (I(\theta_n) - \frac{\langle M \rangle_n}{n})u \big\rangle,$ then $|R_n(\theta_0,n)| \leqslant \frac{1}{2}\Vert I(\theta_0) - \frac{\langle M \rangle_n}{n} \Vert R^2$ and the LAN property in $\theta_0$ follows immediatly from (\ref{loi_martingale_P1}) and (\ref{proba_inf_Fisher_P1}).
\end{proof}

\begin{rem}\label{Rem3} In \cite{IK81}, the LAN property is defined for stastistical experiments admitting decomposition (\ref{LAN_repre_P3}) with $J(\theta_0)=Id_d$. Is not a real restriction since the decomposition (\ref{LAN_repre_P3}) can be reformulated as the Definition 2.1, chapter 2 in \cite{IK81} by letting $v = J(\theta_0)^{-\frac{1}{2}}u$ or equivalently considering a new sequence of matrix rate given by $\widetilde{\phi}_n(\theta_0) = \phi_n(\theta_0)J(\theta_0)^{-\frac{1}{2}}.$
\end{rem}

\subsection{Proof of Proposition \ref{eff_MLE_P3}}\label{Preuve_5}
\begin{proof}
The result follows from (\ref{loi_est1_P1}). More precisely the following condition to obtained the equality in Lemma 13.1, chapter 2 of \cite{IK81}
\begin{equation}\label{Effi_MLE}
{I(\theta_0)^{\frac{1}{2}} \sqrt{n}(\widehat{\theta}_n-\theta_0) - I(\theta_0)^{-\frac{1}{2}} \frac{M_n}{\sqrt{n}}}\xrightarrow[n\to\infty]{\mathbb{P}_{\theta_0}^{(n)}}0
\end{equation}
is satisfied since the MLE is asymptotically efficient in Fisher's sense. 
\end{proof}

\begin{defi}\label{test_equivalent}
A test $\phi_n^1$ is asymptotically equivalent to $\widetilde{\phi}_n^1$ if
\begin{equation}\label{test_equivalent_P3}
{\phi_n^1-\widetilde{\phi}_n^1}\xrightarrow[n\to\infty]{\mathbb{P}_{\theta_0}^{(n)}}0.
\end{equation}
\end{defi}

\noindent We can now state the theorem characterizing the tests AUMPI($\alpha$) established in Theorem 3 of \cite{CHA96}.

\begin{theo}\label{Theo_AUMPI_test_P3}
Take notation of definition \ref{LAN_property_P3} and denote by $C_{\alpha}$ the $\alpha$-quantile of $\chi_d^2.$ Then, the test
\begin{equation}\label{test_AUMPI_P3}
\phi_n=\mathbbm{1}_{ \left\lbrace \langle J(\theta_0)^{-1}Z_n(\theta_0),Z_n(\theta_0)\rangle \geqslant C_{\alpha} \right\rbrace }
\end{equation} 
is AUMPI($\alpha$) to test $\theta=\theta_0$ against $\theta \neq \theta_0$ and any other asymptotically equivalent test is AUMPI($\alpha$).
\end{theo}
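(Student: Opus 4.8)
The plan is to push the testing problem into the Gaussian limit experiment furnished by the LAN property and there invoke the classical theory of uniformly most powerful invariant (UMPI) tests for a Gaussian location model. By Remark~\ref{Rem3} I may assume without loss of generality that $J(\theta_0)=Id_d$, so that the statistic in (\ref{test_AUMPI_P3}) reduces to $\|Z_n(\theta_0)\|^2$. First I would treat the level: since $Z_n(\theta_0)\xrightarrow{\mathcal{L}}\mathcal{N}(0,Id_d)$ under $\mathbb{P}_{\theta_0}^{(n)}$ by (\ref{score_LAN}), the continuous mapping theorem gives $\|Z_n(\theta_0)\|^2\xrightarrow{\mathcal{L}}\chi_d^2$, and as $C_\alpha$ is a continuity point this yields $\mathbb{E}_{\theta_0}^{(n)}(\phi_n)\to\mathbb{P}(\chi_d^2\geqslant C_\alpha)=\alpha$, so (\ref{ineg_AUMP1_P3}) holds with equality.

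Next I would describe the behaviour under the local alternatives $\mathbb{P}_{\theta_0+\phi_n(\theta_0)u}^{(n)}$. The representation (\ref{LAN_repre_P3}) together with (\ref{reste_LAN}) makes the two sequences mutually contiguous (Le Cam's first lemma), and Le Cam's third lemma upgrades (\ref{score_LAN}) to $Z_n(\theta_0)\xrightarrow{\mathcal{L}}\mathcal{N}(u,Id_d)$ under $\mathbb{P}_{\theta_0+\phi_n(\theta_0)u}^{(n)}$. Hence $\|Z_n(\theta_0)\|^2$ converges to a noncentral $\chi_d^2$ with noncentrality $\|u\|^2$, the limiting power of $\phi_n$ equals $\mathbb{P}(\chi_d^2(\|u\|^2)\geqslant C_\alpha)$, and the limit test attached to $\phi_n$ by Lemma~\ref{lim_test_P3} is $\phi^*(x)=\mathbbm{1}_{\{\|x\|^2\geqslant C_\alpha\}}$, which is manifestly rotation invariant; thus $\phi_n$ meets the invariance requirement of Definition~\ref{AUMPI_test_P3}.

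For the optimality I would argue entirely in the limit experiment. Let $\phi_n^1$ be any invariant test of asymptotic level $\alpha$. Along an arbitrary subsequence, Lemma~\ref{lim_test_P3} produces a further subsequence $n''$ and a rotation-invariant limit test $\psi:\mathbb{R}^d\to[0,1]$ with $\lim_{n''}\mathbb{E}_{\theta_0+\phi_{n''}(\theta_0)u}^{(n'')}(\phi_{n''}^1)=\int_{\mathbb{R}^d}\psi(x)\Phi_d(x-u)\,\mathrm{d}x=\mathbb{E}_u[\psi(X)]$, where $X\sim\mathcal{N}(u,Id_d)$, and with $\mathbb{E}_0[\psi(X)]\leqslant\alpha$ from the level constraint at $u=0$. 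The comparison is thereby reduced to testing $u=0$ against $u\neq 0$ for $X\sim\mathcal{N}(u,Id_d)$ among $O(d)$-invariant tests. The maximal invariant is $\|X\|^2$, whose law is the noncentral $\chi_d^2$ with noncentrality $\|u\|^2$; this one-parameter family has monotone likelihood ratio in $\|X\|^2$, so by the Neyman--Pearson/Karlin--Rubin theory the cut-off test $\phi^*(X)=\mathbbm{1}_{\{\|X\|^2\geqslant C_\alpha\}}$ is UMP among invariant level-$\alpha$ tests, giving $\mathbb{E}_u[\psi(X)]\leqslant\mathbb{E}_u[\phi^*(X)]$ for every $u$. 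Translating this inequality back through Lemma~\ref{lim_test_P3} produces (\ref{ineg_AUMP2_P3}), so $\phi_n$ is AUMPI($\alpha$).

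Finally, for the closure statement I would use contiguity once more: if $\phi_n^1-\phi_n\xrightarrow{\mathbb{P}_{\theta_0}^{(n)}}0$, then by the mutual contiguity above the difference also tends to $0$ under every $\mathbb{P}_{\theta_0+\phi_n(\theta_0)u}^{(n)}$, whence $|\mathbb{E}_{\theta_0+\phi_n(\theta_0)u}^{(n)}(\phi_n^1)-\mathbb{E}_{\theta_0+\phi_n(\theta_0)u}^{(n)}(\phi_n)|\to 0$; thus $\phi_n^1$ inherits the same asymptotic level and the same rotation-invariant limit test, and is AUMPI($\alpha$) as well. The main obstacle I anticipate is the passage to the limit experiment --- establishing the contiguity and the shifted limit law of $Z_n(\theta_0)$ under the alternatives, and making the subsequence bookkeeping of Lemma~\ref{lim_test_P3} interact cleanly with the invariance constraint --- whereas the monotone-likelihood-ratio property of the noncentral $\chi_d^2$ family, although classical, must still be checked to license the Karlin--Rubin conclusion.
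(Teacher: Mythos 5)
Your proposal is correct, but note that the paper itself does not prove this statement at all: Theorem \ref{Theo_AUMPI_test_P3} is imported verbatim as Theorem 3 of \cite{CHA96}, and the paper only uses it as a black box in the proof of Theorem \ref{Theo_testAUMPI_AR(p)_P3}. What you have written is therefore a reconstruction of the cited proof rather than an alternative to anything in this article, and your route is indeed the standard one from that reference: contiguity via Le Cam's first lemma, the shifted limit $Z_n(\theta_0)\xrightarrow{\mathcal{L}}\mathcal{N}(J(\theta_0)u,J(\theta_0))$ via the third lemma, reduction to the Gaussian shift experiment through Lemma \ref{lim_test_P3}, and UMPI optimality there via the maximal invariant $\Vert X \Vert^2$ and the monotone likelihood ratio of the noncentral $\chi_d^2$ family (a Poisson mixture of central $\chi^2_{d+2k}$ densities, for which MLR is classical). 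You also read the comparison class correctly: although Definition \ref{AUMPI_test_P3} in the paper is garbled, the intended meaning (as in \cite{CHA96}) is optimality among tests whose attached limit tests are rotation invariant, which is exactly the class your Karlin--Rubin step dominates; no AUMP test against all competitors exists for $d\geqslant 2$, so restricting to this class is essential, not a convenience. Two details deserve to be made explicit in a full write-up. First, to identify the limit test attached to $\phi_n$ with $\phi^*(x)=\mathbbm{1}_{\{\Vert x\Vert^2\geqslant C_\alpha\}}$, and likewise to conclude in the closure statement that an asymptotically equivalent test inherits an invariant limit test, you need uniqueness (Lebesgue-a.e.) of the attached limit test; this follows from completeness of the Gaussian location family, since equality of $\int \psi(x)\Phi_d(x-u)\,\mathrm{d}x$ for $u$ ranging over a set with nonempty interior forces $\psi=\phi^*$ a.e. Second, your contiguity transfer in the closure argument should be finished by observing that $\vert\phi_n^1-\phi_n\vert\leqslant 1$, so convergence to $0$ in probability under the alternatives upgrades to convergence of the expectations. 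With these two points pinned down, your sketch is a complete and faithful proof of the cited result.
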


\subsection{Proof of Theorem \ref{Theo_testAUMPI_AR(p)_P3}}\label{Preuve_6}
\begin{proof}

We have just to show that the test $\widetilde{\phi}_n$ is asymptotically equivalent to the test given in Theorem \ref{Theo_AUMPI_test_P3} under $\mathbb{P}_{\theta_0}^{(n)}$ (the null hypothesis). We have,
\begin{equation}\label{moment1_test}
\vert \widetilde{\phi}_n - \phi_n \vert = \widetilde{\phi}_n + \phi_n -2\widetilde{\phi}_n\phi_n.
\end{equation}
Denoting $\delta_n = \langle I(\theta_0)^{-1}\frac{M_n}{\sqrt{n}},\frac{M_n}{\sqrt{n}}\rangle-2\log\left( \frac{\mathcal{L}(\widehat{\theta}_n,X^{(n)})}{\mathcal{L}(\theta_0,X^{(n)})}\right),$ we have for all $\varepsilon>0,$
\begin{equation}\label{moment1_p1}
\vert \widetilde{\phi}_n - \phi_n \vert \leqslant \widetilde{\phi}_n + \phi_n -2\mathbbm{1}_{\left\lbrace 2\log\left( \frac{\mathcal{L}(\widehat{\theta}_n,X^{(n)})}{\mathcal{L}(\theta_0,X^{(n)})}\right) \geqslant C_{\alpha} + \varepsilon \right\rbrace} \mathbbm{1}_{\left\lbrace \delta_n \geqslant - \varepsilon \right\rbrace}.
\end{equation}
It's easy to see that ${\delta_n}\xrightarrow[n\to\infty]{\mathcal{L}}0$ under $\mathbb{P}_{\theta_0}^{(n)},$ then Slutsky's Theorem together with Lemma \ref{Tech4.1} and the continuous mapping Theorem's (see Theorem 2.1 in \cite{billingsley99}) gives, 
\begin{equation}\label{moment1_p2}
{2\mathbbm{1}_{\left\lbrace 2\log\left( \frac{\mathcal{L}(\widehat{\theta}_n,X^{(n)})}{\mathcal{L}(\theta_0,X^{(n)})}\right) \geqslant C_{\alpha} + \varepsilon \right\rbrace} \mathbbm{1}_{\left\lbrace \delta_n \geqslant - \varepsilon \right\rbrace}}\xrightarrow[n\to\infty]{\mathcal{L}}{2\mathbbm{1}_{\left\lbrace Z \geqslant C_{\alpha} + \varepsilon \right\rbrace}}
\end{equation}
under $\mathbb{P}_{\theta_0}^{(n)}$ where $Z \sim \chi_{p}^2.$ Since the second member of (\ref{moment1_p1}) is uniformly bounded with respect to $n$ by $1$, we have,
\begin{equation}\label{moment1_p3}
\limsup\limits_{n \to \infty} \mathbb{E}_{\theta_0}^{(n)}(\vert \widetilde{\phi}_n - \phi_n \vert) \leqslant 2\mathbb{P}(Z \geqslant C_{\alpha}) - 2\mathbb{P}(Z \geqslant C_{\alpha} + \varepsilon),
\end{equation}
and the previous inequality leads to
\begin{equation}\label{moment1_p4}
{\mathbb{E}_{\theta_0}^{(n)}(\vert \widetilde{\phi}_n - \phi_n \vert)}\xrightarrow[n\to\infty]{}0.
\end{equation}
The last condition ensures that the tests $\widetilde{\phi}_n$ and $\phi_n$ are asymptotically equivalent. 
\end{proof}

\nocite{*}

\bibliographystyle{plain}
\bibliography{bibart}

\vspace{10pt}

\end{document}